\documentclass[12pt,fleqn]{article}

\usepackage{amsmath}
\usepackage{amsthm}
\usepackage{amssymb}
\usepackage{amsfonts}
\usepackage{epsf}
\usepackage{graphicx}
\usepackage{hyperref}
\usepackage{color}

\textwidth 6.22in \textheight 9.2in \oddsidemargin 0.00in
\evensidemargin 0.00in \topmargin -0.5in \marginparwidth 0.00in
\marginparsep 0.00in \linespread{1.1}

\newtheorem{lemma}{Lemma}

\newtheorem{theorem}{Theorem}

\newtheorem{prop}{Proposition}

\theoremstyle{remark}
\newtheorem{rmk}{Remark}
\theoremstyle{definition}

\DeclareMathOperator*\Res{{Res}} \DeclareMathOperator\Ai{{Ai}}
 \DeclareMathOperator\re{{Re}}
\DeclareMathOperator\im{{Im}} \numberwithin{equation}{section}

\newcommand{\D}{\displaystyle}

\numberwithin{equation}{section}

\newcounter{comment}
\setcounter{comment}{1}

\def\a{\alpha}

\def\l{\lambda}

\def\cut{\setminus}

\begin{document}

\title{Connection formulas for the Ablowitz-Segur solutions of the inhomogeneous Painlev\'e II equation}
\date{\today}
\author{Dan Dai$^{\ast}$ and Weiying Hu$^{\dag}$}

\maketitle

\begin{abstract}
   We consider the second Painlev\'e equation
   $$
   u''(x)=2u^3(x)+xu(x)-\alpha,
   $$
   where $\alpha $ is a nonzero constant. Using the Deift-Zhou nonlinear steepest descent method for Riemann-Hilbert problems, we rigorously prove the asymptotics as $x \to \pm \infty$ for both the real and purely imaginary Ablowitz-Segur solutions, as well as the corresponding connection formulas. We also show that the real Ablowitz-Segur solutions have no real poles when $\alpha \in (-1/2, 1/2)$.
\end{abstract}


\vspace{2cm}

\noindent 2010 \textit{Mathematics Subject Classification}. Primary
41A60, 33C45.

\noindent \textit{Keywords and phrases}: Painlev\'{e} II equation; Riemann-Hilbert problem; connection formulas.


\vspace{5mm}

\hrule width 65mm

\vspace{2mm}

\begin{description}

\item \hspace*{5mm}$\ast$ Department of Mathematics, City University of
Hong Kong, Hong Kong. \\
Email: \texttt{dandai@cityu.edu.hk}

\item \hspace*{5mm}$\dag$ Department of Mathematics, City University of
Hong Kong, Hong Kong. \\
Email: \texttt{weiyinghu2-c@my.cityu.edu.hk} (corresponding author)

\end{description}

\newpage

\section{Introduction and statement of results}

The second Painlev\'e equation (PII) is the following second-order nonlinear differential equation
\begin{equation}\label{PII-def}
   u''(x)=2u^3+xu-\a,
\end{equation}
where $\alpha$ is a constant. When $\alpha = 0$, this equation is called \emph{homogeneous PII}, otherwise it is called \emph{inhomogeneous PII}. The PII equation, together with the other five ones, was first studied by Painlev\'e and his colleagues at the turn of twentieth century; see more information about the Painlev\'e equation and the historical background in Ince \cite{Ince:book}. In general, these equations are irreducible, in the sense that they cannot be solved in terms of elementary functions or known classical special functions (Airy, Bessel, hypergeometric functions, etc.). Moreover, their solutions, namely the \emph{Painlev\'e transcendents}, satisfy the famous \emph{Painlev\'e property}: the only movable singularities are poles. Here, ``movable" means that the location of singularities depends on initial conditions of the equation.

Although the study of the Painlev\'e equations originates from a purely mathematical point of view, they have found important applications in many areas of mathematical physics, such as random matrix theory, statistical physics, integrable continuous dynamical systems,  nonlinear waves, 2D quantum gravity, etc. In addition, the Painlev\'e transcendents themselves also satisfy a lot of nice properties; see the nice survey article Clarkson \cite{Clarkson2006} and references therein. Nowadays, the Painlev\'e transcendents are recognized as nonlinear special functions in the 21st century; see the NIST handbook \cite{Clarkson:NIST}.

In the study of the Painlev\'e equations, one of the important topics is the asymptotic behavior of their solutions. Let us focus on PII, which is probably the most well studied one among the Painlev\'e family. Denote the solution of PII by $u(x;\alpha)$. It is well-known that the PII transcendents are meromorphic functions whose poles are all simple with residue $\pm 1$; for example see Gromak, Laine and Shimomura \cite[Sec. 2]{Gromak2002}. In general, $u(z;\a)$ possesses infinitely many poles which are distributed in the complex $z$-plane; see the nice numerical plots by Fornberg and Weideman \cite{Fornberg2014}. There also exist ``tronqu\'ee" and ``tri-tronqu\'ee" solutions whose poles are confined in some sectors when $z$ is large enough; see Joshi and Mazzocco \cite{Joshi:Maz}. If the solutions are pole-free on the real line, they usually have significant applications in mathematical physics. For these pole-free solutions, it is important to study their asymptotic behaviors as $x \to \pm \infty$ and find out how these behaviors are related, i.e. the \emph{connection formulas}. For the homogeneous case $\a = 0$, these solutions have been studied intensively in the literature.

\subsection{Homogeneous PII}

When $\a = 0$, let us consider solutions of \eqref{PII-def} which decay as $x \to +\infty$, that is, the solutions have the following boundary condition
\begin{equation} \label{PII0-decay-bc}
  u(x;0) \to 0 \qquad \textrm{as }  x \to + \infty.
\end{equation}
With the above boundary condition, one can see that the term $2u^3$ is small and negligible comparing with the other term $xu$. Then, the equation \eqref{PII-def} is close to the Airy equation $u''= x u$ when $x \to +\infty$. Indeed, Hastings and McLeod \cite{Hastings1980} proved the following results.
\begin{theorem}
  When $ \a = 0$, any solution of \eqref{PII-def} satisfying \eqref{PII0-decay-bc} is asymptotic to $k \Ai(x)$, for some $k$, with
$\Ai(x)$ the Airy function. Conversely, for any $k$, there is a unique solution which is
asymptotic to $k \Ai(x)$ as $x \to + \infty$, for some $k$.
\end{theorem}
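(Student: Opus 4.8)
\noindent\emph{Proof strategy.} The plan is to treat the equation near $+\infty$ as a nonlinear perturbation of the Airy equation. Since $u(x;0)\to 0$, the term $2u^3$ is negligible against $xu$, so I expect the two independent ``directions'' of the solution to be governed by the Airy functions $\Ai$ and $\Bi$, whose large-$x$ asymptotics are $\Ai(x)\sim \tfrac{1}{2\sqrt\pi}x^{-1/4}e^{-\zeta}$ and $\Bi(x)\sim \tfrac{1}{\sqrt\pi}x^{-1/4}e^{\zeta}$ with $\zeta=\tfrac23 x^{3/2}$, and whose Wronskian is $\Ai\,\Bi'-\Ai'\,\Bi=\tfrac1\pi$. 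The decaying mode is $\Ai$ and the growing mode is $\Bi$, so the content of the theorem is that the decaying solutions are exactly those carrying no $\Bi$-component, parametrised by the coefficient $k$ of $\Ai$.

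For the converse (existence and uniqueness for each $k$) I would recast the problem as the integral equation
\begin{equation}\label{HMie}
  u(x)=k\,\Ai(x)+\pi\int_x^{\infty}\bigl[\Ai(x)\Bi(t)-\Bi(x)\Ai(t)\bigr]\,2u(t)^3\,dt .
\end{equation}
Differentiating twice and using $\Ai''=x\Ai$, $\Bi''=x\Bi$ together with the Wronskian identity shows that the boundary contributions cancel at $t=x$ and that \eqref{HMie} is equivalent to $u''=xu+2u^3$; the kernel is arranged precisely so that the growing solution $\Bi$ does not enter the leading term. I would then solve \eqref{HMie} by a contraction mapping on $[X,\infty)$ for $X$ large, in the weighted space $\{u:\ \sup_{x\ge X}|u(x)|/\Ai(x)<\infty\}$ restricted to the ball $|u(x)|\le 2|k|\,\Ai(x)$. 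The key point is that $|u(t)^3|\le C\,\Ai(t)^3\sim e^{-3\zeta(t)}$ beats the growth $\Bi\sim e^{\zeta}$ of the kernel, so the integral term is $O\!\left(\Ai(x)\,e^{-2\zeta(x)}\right)=o(\Ai(x))$ and the map is a genuine contraction once $X$ is large. The fixed point is the unique solution on $[X,\infty)$ with $u(x)\sim k\,\Ai(x)$, and it extends to a meromorphic solution on all of $\mathbb R$ by the Painlev\'e property.

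For the forward direction I would begin from the weaker hypothesis $u\to 0$ and \emph{upgrade} it to a decay rate. Choosing $X$ so that $2u(x)^2\le x/2$ for $x\ge X$, the equation reads $u''=(x+2u^2)u$ with positive coefficient $x+2u^2\ge x/2$; hence $u$ cannot oscillate, is eventually of one sign and monotone, and a comparison with the subdominant WKB solution yields a super-exponential bound $|u(x)|\le C e^{-c\,x^{3/2}}$. With this bound the full variation-of-parameters representation of $u$ converges, and the requirement $u\to 0$ forces the coefficient of the growing mode $\Bi$ to vanish identically; this puts $u$ into the form \eqref{HMie} with $k:=\lim_{x\to\infty}u(x)/\Ai(x)$ existing and finite, so that $u\sim k\,\Ai(x)$.

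The main obstacle, and the genuinely delicate part, is the a priori decay estimate in the forward direction: converting the qualitative statement $u\to 0$ into the quantitative super-exponential bound that simultaneously controls the cubic integral and excludes any $\Bi$-contribution. This is exactly the step where the original comparison/shooting analysis of Hastings--McLeod is required. By contrast, once the weighted space is fixed, the contraction argument for the converse and the extraction of the constant $k$ are comparatively routine.
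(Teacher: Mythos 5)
The paper does not actually prove this statement: it appears as quoted background, attributed to Hastings and McLeod \cite{Hastings1980}, so there is no internal proof to compare yours against. Judged on its own, your outline is a faithful reconstruction of the classical argument (and of the integral-equation setup reused by Deift--Zhou \cite{Deift1995}), and its technical claims check out. The kernel $\pi\bigl[\Ai(x)\Bi(t)-\Bi(x)\Ai(t)\bigr]$ is the Green's function adapted to decay at $+\infty$: differentiating twice and using $\Ai\Bi'-\Ai'\Bi=1/\pi$ gives $u''=xu+2u^{3}$, which is \eqref{PII-def} with $\a=0$. The contraction estimate works because $\Bi(t)\Ai(t)^{3}=O\bigl(e^{-\frac{4}{3}t^{3/2}}\bigr)$, so the integral term is $O\bigl(\Ai(x)e^{-\frac{4}{3}x^{3/2}}\bigr)=o(\Ai(x))$. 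In the forward direction, your monotonicity claim follows from convexity (where $u>0$ one has $u''=(x+2u^{2})u>0$, so no positive maxima, and a sign change would drive $u\to-\infty$), and the comparison can be made rigorous by taking $v(x)=\Ai(2^{-1/3}x)$, noting the Wronskian $u'v-uv'$ is nondecreasing with limit $0$, hence $u/v$ is nonincreasing and $|u(x)|\le Ce^{-\frac{\sqrt{2}}{3}x^{3/2}}$ --- fast enough for the cubic integrals to converge and to force the $\Bi$-coefficient to vanish. Those two steps are asserted rather than carried out, but you correctly flag them as the locus of the Hastings--McLeod analysis; the strategy itself has no gap.
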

Let us first consider the real solutions, i.e., $u(x) \in \mathbb{R}$ for real $x$. It is well-known that, depending on the values of $k \in \mathbb{R}$, there exist three classes of solutions for $k \in (-1,1)$, $k = \pm 1$ and $|k| > 1$, respectively.

\medskip

\noindent \textbf{Ablowitz-Segur(AS) solutions: $k \in (-1,1)$.} \\ The AS solution $u_{\textrm{AS}}(x;0)$ is a one-parameter family of solutions of the homogeneous PII in \eqref{PII-def}. They are continuous on the real axis and possess the following asymptotic behaviors.
\begin{eqnarray}
  u_{\textrm{AS}}(x;0) & = & k \Ai (x) (1+o(1)), \quad k \in (-1,1), \qquad \textrm{as } x \to +\infty, \label{PII0-AS1} \\
  u_{\textrm{AS}}(x;0) & = & \frac{d}{(-x)^{1/4}} \cos \biggl(\frac{2}{3}(-x)^{3/2}-\frac{3}{4}d^2\ln(-x) + \phi \biggr) +O\left(\frac{\ln(-x)}{(-x)^{5/4}}\right),  \label{PII0-AS2} \\
  && \hspace{6.6cm} \textrm{as } x \to -\infty, \nonumber
\end{eqnarray}
where the constants $d$ and $\phi$ satisfy the following connection formulas
\begin{eqnarray}
  d(k) &=& \frac{1}{\sqrt{\pi}} \sqrt{-\ln(1-k^2)}, \label{PII0-AS-conn1} \\
  \phi(k) &=& -\frac{3}{2} d^2 \ln 2 +\arg \Gamma{\biggr(\frac{1}{2}id^2}\biggr)+ \frac{\pi}{2} \textrm{sgn}\, k - \frac{\pi}{4} . \label{PII0-AS-conn2}
\end{eqnarray}

\noindent \textbf{Hastings-McLeod(HM) solutions: $k=1$.} \\ The HM solution $u_{\textrm{HM}}(x;0)$ is the unique solution of the homogeneous PII in \eqref{PII-def}, which is continuous on the real axis and has the following asymptotic behaviors
\begin{eqnarray}
  u_{\textrm{HM}}(x;0) & = & \Ai (x) (1+o(1)),  \hspace{1.8cm} \textrm{as } x \to +\infty, \label{PII0-HM1} \\
  u_{\textrm{HM}}(x;0) & = & \sqrt{\frac{-x}{2}} + O\left(\frac{1}{(-x)^{5/2}}\right), \quad \textrm{as } x \to -\infty. \label{PII0-HM2}
\end{eqnarray}

\begin{rmk}
  When $k \to \pm 1$ in \eqref{PII0-AS1}, the oscillatory behavior \eqref{PII0-AS2} as $x \to -\infty$  turns into the square root behavior $u(x;0) \sim \textrm{sgn}(k) \sqrt{-x/2}$. In the literature, the HM solution given in \eqref{PII0-HM1} and \eqref{PII0-HM2} corresponds to the case when $k=1$. One can easily get the solution for $k=-1$ through the following symmetry relation
  \begin{equation}\label{sym-transfmn}
    u(x;\a)=-u(x;-\alpha) \qquad \textrm{for all } \a \in \mathbb{C}.
  \end{equation}
\end{rmk}


\noindent \textbf{Singular solutions: $|k|>1$, $k \in \mathbb{R}$.} \\
When $|k|>1$, the solution $u(x;0)$ is no longer pole-free on the real line and infinitely many poles appear on the negative real axis; see the numerical plot in Fornberg and Weideman \cite[Fig. 12]{Fornberg2014}. These solutions have the following asymptotic behaviors
\begin{eqnarray}
  u_{\textrm{SIN}}(x;0) & = & k \Ai (x) (1+o(1)), \quad |k|> 1, \ k \in \mathbb{R}, \qquad \textrm{as } x \to +\infty, \\
  u_{\textrm{SIN}}(x;0) &=& \frac{\sqrt{-x}}{\sin\biggl(\frac{2}{3}(-x)^{3/2} + \beta \ln(8(-x)^{3/2}) + \phi \biggr) + O((-x)^{-3/2})} \nonumber \\
  &&  + O((-x)^{-1}),  \qquad \textrm{as } x \to -\infty, \label{PII0-sin}
\end{eqnarray}
 where $x$ is bounded away from the singularities appearing in the denominator. Here, the constants $\beta$ and $\phi$ satisfy the following connection formulas
  \begin{eqnarray}
      \beta(k) &=& \frac{1}{2\pi} \ln(k^2-1),  \\
      \phi(k) &=& -\arg \Gamma\biggr(\frac{1}{2} + i \beta \biggr)  + \frac{\pi}{2} \textrm{sgn}\, k - \frac{\pi}{2}. \label{PII0-sin-conn}
  \end{eqnarray}

Besides the above cases where $k$ is real, the parameter $k$ could be complex, too. In the literature, people are also interested in the purely imaginary solutions, i.e., $\re u = 0$ for $x \in \mathbb{R}$. Since the residues of all poles of PII transcendents are 1 or $-1$, the purely imaginary solution is pole-free for all $k \in i\mathbb{R}$. Therefore, unlike the above three different cases for real $k$, there is only one case when $k\in i\mathbb{R} $ and the behavior of the solutions is similar to that of the real AS solutions. Here, we adopt the notation ``$i$AS" for the purely imaginary case and hope this will not bring any confusion.

\medskip
\noindent \textbf{Purely imaginary Ablowitz-Segur($i$AS) solutions: $k \in i \mathbb{R}$.} \\
The $i$AS solution $u_{i\textrm{AS}}(x;0)$ is a one-parameter family of solutions of the homogeneous PII in \eqref{PII-def} with the following asymptotic behaviors
\begin{eqnarray}
  u_{i\textrm{AS}}(x;0) & = & k \Ai (x) (1+o(1)), \quad k \in i\mathbb{R}, \qquad \textrm{as } x \to +\infty, \label{PII0-iAS1} \\
  u_{i\textrm{AS}}(x;0) & = & \frac{d}{(-x)^{1/4}} \sin \biggl(\frac{2}{3}(-x)^{3/2}-\frac{3}{4}d^2\ln(-x) + \phi \biggr) +O\left(\frac{\ln(-x)}{(-x)^{5/4}}\right),  \label{PII0-iAS2} \\
  && \hspace{6.6cm} \textrm{as } x \to -\infty, \nonumber
\end{eqnarray}
where the constants $d$ and $\phi$ satisfy the following connection formulas
\begin{eqnarray}
  d(k) &=& \frac{i}{\sqrt{\pi}} \sqrt{\ln(1+|k|^2)}, \label{PII0-iAS-conn1} \\
  \phi(k) &=& -\frac{3}{2} d^2 \ln 2 +\arg \Gamma{\biggr(\frac{1}{2}id^2}\biggr)- \frac{\pi}{4}. \label{PII0-iAS-conn2}
\end{eqnarray}

The beautiful formal expansions for the AS solution in \eqref{PII0-AS1}-\eqref{PII0-AS-conn2} and the HM solutions in  \eqref{PII0-HM1}-\eqref{PII0-HM2} were derived by Ablowitz and Segur in \cite{Ablowitz1977-asymptotic,Segur1981} and Hastings and McLeod in  \cite{Hastings1980}, respectively. Later, these results were justified rigorously in several different ways, for example, through Gelfand-Levitan type integral equations \cite{Clarkson1988,Hastings1980}, the isomonodromy method \cite{Its:Kap1988,Suleimanov1987}, the Deift-Zhou nonlinear steepest descent method \cite{Deift1995} and a uniform approximation method \cite{Bas:Cla:Law:McL1998}. The singular asymptotics in \eqref{PII0-sin}-\eqref{PII0-sin-conn} are relatively new comparing with the AS and HM asymptotics. They were first derived by Kapaev through the isomonodromy method in \cite{Kapaev1992}, and then proved again by Bothner and Its in \cite{Bot:Its2012} with the Deift-Zhou nonlinear steepest descent method. For the purely imaginary case, the asymptotic expansions in \eqref{PII0-iAS1}-\eqref{PII0-iAS2} and the corresponding connection formulas \eqref{PII0-iAS-conn1}-\eqref{PII0-iAS-conn2} were also obtained by Its and Kapaev \cite{Its:Kap1988} with the isomonodremy method and justified again by Deift and Zhou \cite{Deift1995} with the nonlinear steepest descent method. It is interesting to note that both the isomonodromy method and Deift-Zhou nonlinear steepest descent method are based on the Riemann-Hilbert(RH) problem for the Painlev\'e equations; see the RH problem for PII in Section \ref{sec:rhp} as an example. The RH problem originates from the pioneer work of Flaschka and Newell \cite{Flaschka1980} and Jimbo, Miwa and Ueno \cite{Jimbo1981}, where they considered a system of linear ordinary differential equations with regular and irregular singular points.  The main difference between these two methods is that certain prior assumptions on the asymptotics of the solutions are needed in the isomonodromy method. Therefore, one may view the Deift-Zhou nonlinear steepest descent method as a direct method in the sense that it does not need any prior assumptions.

\begin{rmk}
  Recently, the transition asymptotics for the homogeneous PII was studied by Bothner \cite{Bothner2015}. The transition asymptotics describes how the asymptotic behavior of $u(x;0)$ as $x \to -\infty$ changes from an oscillatory one (Ablowitz and Segur) to a power-like one (Hastings and McLeod), and then to a singular one (Kapaev) when the parameter $k$ in \eqref{PII0-AS1} varies. Using the Deift-Zhou nonlinear steepest descent method, the author shows that the transition asymptotics are of Boutroux type, i.e., they are expressed in terms of Jacobi elliptic functions.
\end{rmk}

\begin{rmk}
  For the homogeneous PII, Bogatskiy, Claey and Its \cite{Bogatskiy:Claey:Its2016} extended the real and purely imaginary AS solutions to \emph{complex AS solutions}  by letting the parameter $k$ in \eqref{PII0-AS1} and \eqref{PII0-iAS1} be a complex number in $\mathbb{C}\cut \left((-\infty, -1] \cup [1, +\infty)\right)$. Similar results for the asymptotics, connection formulas as well as the pole-free property on the real axis are also obtained; see \cite[Thm. 1]{Bogatskiy:Claey:Its2016}.

\end{rmk}

\subsection{Inhomogeneous PII and our main results}

Although there is an extensive literature regarding the asymptotics for the homogeneous PII, the asymptotic results for the inhomogeneous PII are relatively fewer. When $\a \neq 0$, if one also seeks a solution like \eqref{PII0-decay-bc} which decay as $x \to +\infty$, then we need to balance terms $xu$ and $\a$ in \eqref{PII-def}. That is, when $\a \neq 0$, the solutions $u(x;\a)$ are no longer exponentially small, but tend to 0 like $\a / x$. Then, to obtain real and purely imaginary solutions, the parameter $\a$ is required to be real and purely imaginary, respectively.

For the real case ($\a \in \mathbb{R}$), the inhomogeneous PII also possesses solutions which are similar to the AS and HM solutions given in \eqref{PII0-AS1}-\eqref{PII0-HM2}.
In this paper, we are going to prove the following results for $u_{\textrm{AS}}(x;\a)$ rigorously.

\begin{theorem}\label{main-thm}
    Given $\a\in (-\frac{1}{2}, \frac{1}{2})$, there exists a one-parameter family of real solutions $u_{\textrm{AS}}(x;\a)$ for $ k \in (-\cos(\pi \a),  \cos(\pi \a))$ with the following properties:
    \begin{itemize}

    \item[(a)] $u_{\textrm{AS}}(x;\a)$ has the following asymptotic behaviors:
    \begin{equation}\label{asy-pos}
    u_{\textrm{AS}}(x;\a)= B(\alpha; x) + k\Ai(x)(1+O(x^{-3/4})), \qquad \textrm{as} \,\, x \to +\infty,
    \end{equation}
    and
    \begin{equation}\label{asy-neg}
    u_{\textrm{AS}}(x;\a)=\frac{d}{(-x)^{1/4}}\cos\{\frac{2}{3}(-x)^{3/2}-\frac{3}{4}d^2\ln(-x) + \phi\} +O(|x|^{-1}), \quad \textrm{as}\,\, x\to -\infty,
    \end{equation}
    where $\Ai(x)$ is the Airy function and $B(\alpha; x)$ is of the form
    \begin{align}\label{asy-B(a;x)}
    B(\alpha; x)\sim \frac{\alpha}{x}\sum_{n=0}^{\infty} \frac{a_n}{x^{3n}}.
    \end{align}
    The coefficients $a_n$ in the above formula are determined uniquely through the following recurrence relation
    \begin{align}\label{a_n-rec-rel}
     a_0 = 1, \qquad a_{n+1}=(3n+1)(3n+2)a_n - 2\a^2 \sum_{k,l,m =0}^{n} a_k a_l a_m.
    \end{align}

    \item[(b)] The connection formulas are given by
    \begin{equation}\label{d-k}
    d(k) = \frac{1}{\sqrt{\pi}} \sqrt{-\ln(\cos^2(\pi \alpha)-k^2)},
    \end{equation}
    and
    \begin{equation}\label{phi-k}
    \phi(k) = -\frac{3}{2} d^2 \ln 2 +\arg \Gamma{\biggr(\frac{1}{2}id^2}\biggr)- \frac{\pi}{4} -\arg (-\sin \pi \a -ki).
    \end{equation}

    \item[(c)] $u_{\textrm{AS}}(x;\a)$ is pole-free on the real line.
    \end{itemize}
\end{theorem}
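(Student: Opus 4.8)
The plan is to base the whole analysis on the Riemann--Hilbert (RH) problem for the Flaschka--Newell Lax pair of the inhomogeneous PII, in which the parameter $\a$ enters as the exponent of a regular singular point of the associated linear system at $\lambda=0$, so that the solution near the origin behaves like $\lambda^{\a\sigma_3}$ times an analytic factor. The AS solutions are singled out as a one-parameter subfamily of the monodromy data by prescribing the Stokes multipliers so that the relevant reflection coefficient equals $k$ and the formal monodromy at the origin contributes the factor $\cos\pi\a$; one checks that the reality and admissibility conditions force precisely $\a\in(-\frac12,\frac12)$ and $k\in(-\cos\pi\a,\cos\pi\a)$. Before touching the RH problem, the series $B(\a;x)$ in \eqref{asy-B(a;x)} is obtained by a purely formal calculation: substituting the ansatz $u=\frac{\a}{x}\sum_n a_n x^{-3n}$ into \eqref{PII-def} and matching coefficients of $x^{-3n-3}$ gives \eqref{a_n-rec-rel}, the $u''$ term producing the factor $(3n+1)(3n+2)$, the $2u^3$ term producing the cubic convolution $\sum a_k a_l a_m$, and the leading $xu$ term cancelling the inhomogeneity $-\a$ at order $x^{0}$.

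For the asymptotics as $x\to+\infty$ I would run the Deift--Zhou steepest descent with phase $\theta(\lambda)=i\bigl(\tfrac43\lambda^3+x\lambda\bigr)$. The stationary points $\pm i\sqrt{x}/2$ lie off the real axis, the jump matrices converge to the identity away from the origin, and the only delicate local analysis is a parametrix at $\lambda=0$ of special-function (confluent hypergeometric) type adapted to the $\a$-singularity. Reading $u$ off the $1/\lambda$-coefficient in the large-$\lambda$ expansion of the global solution then separates into the algebraically decaying contribution $B(\a;x)$ coming from the origin parametrix and the exponentially small Airy contribution $k\Ai(x)(1+O(x^{-3/4}))$, which yields \eqref{asy-pos}.

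The analysis as $x\to-\infty$ is the heart of the proof and the main obstacle. Now the stationary points $\pm\sqrt{-x}/2$ are real, so one opens lenses, introduces a $g$-function supported on the band $[-\sqrt{-x}/2,\sqrt{-x}/2]$, and installs Airy parametrices at the two turning points. The genuinely new feature relative to the homogeneous case is that the singular point $\lambda=0$ now sits \emph{inside} the oscillatory band, so its local parametrix (again of confluent hypergeometric type, producing the $\Gamma$-function with imaginary argument) must be matched simultaneously to the outer band parametrix and to both Airy parametrices. Tracking the product of connection matrices through this matching is what produces the connection formulas: the modulus of the effective reflection data gives $d^2=-\frac1\pi\ln(\cos^2\pi\a-k^2)$, hence \eqref{d-k}, while the accumulated phase consists of the $\arg\Gamma(\tfrac12 id^2)$, $-\frac32 d^2\ln2$ and $-\frac\pi4$ contributions from the origin parametrix together with the genuinely $\a$-dependent term $-\arg(-\sin\pi\a-ki)$ coming from the connection matrix of the singular point, giving \eqref{phi-k}. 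Setting $\a=0$ collapses these to \eqref{PII0-AS-conn1}--\eqref{PII0-AS-conn2}, a useful consistency check.

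Finally, the pole-free property (c) follows from unique solvability of the RH problem for every real $x$. Since the PII transcendent is meromorphic and its poles occur exactly at the $x$ for which the RH problem fails to be solvable, it suffices to establish solvability for all $x\in\mathbb{R}$. For $\a\in(-\frac12,\frac12)$ one has $\cos\pi\a>0$, and for $k\in(-\cos\pi\a,\cos\pi\a)$ the quantity $\cos^2\pi\a-k^2$ is strictly positive, which endows the jump matrices with the Schwarz-type symmetry needed for a vanishing lemma. A standard argument then rules out nontrivial solutions of the associated homogeneous problem, giving unique solvability for every real $x$ and hence the absence of real poles. The principal technical difficulty throughout is the construction of the origin parametrix and, above all, its simultaneous matching with the turning-point parametrices in the $x\to-\infty$ regime, which is where the $\cos\pi\a$ and $\arg(-\sin\pi\a-ki)$ corrections to the homogeneous formulas are generated.
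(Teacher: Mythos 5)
Your overall architecture --- the RH setup with the $\alpha$-singularity at $\lambda=0$, the Stokes data $s_2=0$, $s_1=-\sin\pi\alpha-ik$, $s_3=\bar{s}_1$, a vanishing-lemma argument for part (c), the formal derivation of the recurrence \eqref{a_n-rec-rel}, and steepest descent in the two limits --- matches the paper, and your treatment of (c) and of $B(\alpha;x)$ is sound (the paper handles $x\to+\infty$ by citing Its--Kapaev rather than redoing the analysis, but your direct route there is legitimate). However, your $x\to-\infty$ analysis, which you correctly identify as the heart of the proof, is built on the wrong local model, and this is a genuine gap rather than a stylistic difference. After rescaling $\lambda=(-x)^{1/2}z$ the stationary points are $z_\pm=\pm\frac12$, and since $1-s_1s_3=\cos^2(\pi\alpha)-k^2\in(0,1)$ for AS data, the jump on $(z_-,z_+)$ admits an LDU factorization whose middle factor is the \emph{constant diagonal} matrix $(1-s_1s_3)^{\sigma_3}$. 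The local RH problems at $z_\pm$ therefore mix this diagonal factor with the oscillatory triangular factors, and their solution is the parabolic cylinder model $D_\nu$ with $\nu=-\frac{1}{2\pi i}\ln(1-s_1s_3)$, exactly as in Deift--Zhou's treatment of the homogeneous case. Airy parametrices solve a different local problem (purely off-diagonal band jump), which arises only in the degenerate Hastings--McLeod situation $k=\pm\cos(\pi\alpha)$, where $1-s_1s_3=0$; for $|k|<\cos(\pi\alpha)$ they cannot satisfy the required jump conditions. Moreover no $g$-function is needed at all: the original phase $\tilde\theta(z)=i(\tfrac43z^3-z)$ already has the correct signature table. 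The hallmarks of the stated asymptotics --- the $\tfrac34 d^2\ln(-x)$ phase correction and the $\arg\Gamma(\tfrac12 id^2)$ term --- are produced by parabolic cylinder models and simply do not appear in an Airy-based scheme.

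Second, you misattribute the source of the connection-formula constants. The $\Gamma$-function, the logarithmic phase, and the term $-\arg(-\sin\pi\alpha-ki)$ do not come from the parametrix at the origin: they come from the parabolic cylinder parametrices at $z_\pm$, through the constant $h_1=\sqrt{2\pi}\,e^{i\pi\nu}/\Gamma(-\nu)$ and the factor $1/(s_1\Gamma(\nu))$ that survives in the leading term, which is computed by residues on the circles $\partial U(z_\pm,\delta)$ and is of order $t^{-1/2}$. The origin parametrix contributes only $O(t^{-1})$, i.e.\ it is subleading: in the paper it is never constructed explicitly --- only its \emph{existence} is proved, via a second vanishing lemma for a model problem $L(\eta)$ --- and since its disk is disjoint from the disks around $z_\pm$, no ``simultaneous matching'' with the stationary-point parametrices is required or even meaningful. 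As proposed, your scheme would not produce \eqref{asy-neg}--\eqref{phi-k}; repairing it means replacing the Airy models at $z_\pm$ by parabolic cylinder models and demoting the origin parametrix to an existence statement whose contribution is absorbed into the error term.
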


  The formal asymptotic behaviors \eqref{asy-pos} and \eqref{asy-neg}, as well as the connection formulas \eqref{d-k} and \eqref{phi-k} first appeared in McCoy and Tang \cite{mccoy:Tang1986}. Later, Kapaev \cite{Kapaev1992} rigorously justified these results by using  the isomonodromy method. Unfortunately,  the detailed proofs were not provided in \cite{Kapaev1992}. The rigorous proof of \eqref{asy-pos} as $x \to + \infty$ can be found in Fokas et al. \cite[Chap. 11]{Fokas2006} and Its and Kapaev \cite{Its2003}, which relies on the Deift-Zhou nonlinear steepest descent method for Riemann-Hilbert problems.
  However, to the best of our knowledge, the rigorous proofs of the asymptotic behavior as $x \to -\infty$, the connection formulas \eqref{d-k} and \eqref{phi-k}, and the pole-free property of $u_{\textrm{AS}}(x;\a)$ have never appeared in the literature. In this paper, we will derive the asymptotic results rigorously by using the Deift-Zhou nonlinear steepest descent method, and prove the pole-free property by establishing a vanishing lemma for the associated RH problems.

\begin{rmk}
  Similar to the homogeneous case, when $k \to \cos(\pi \a)$, the oscillatory behavior \eqref{asy-neg} as $x \to -\infty$  is replaced by the square root behavior $u(x;\a) \sim \textrm{sgn}(k) \sqrt{-x/2}$. The solution with these boundary conditions is the so-called HM solution $u_{\textrm{HM}}(x;\a)$ for the inhomogeneous PII in the literature. The asymptotic behaviors of $u_{\textrm{HM}}(x;\a)$ have been obtained in Its and Kapaev \cite{Its2003} and Kapaev \cite{Kapaev2004}. Later, Claeys, Kuijlaars and Vanlessen \cite{Claeys2008} proved that,  when $\a > -1/2$, the HM solution is pole-free for all $x \in \mathbb{R}$.
\end{rmk}

\begin{rmk}
  When $k> \cos(\pi \a)$ or $k < -\cos(\pi \a)$, infinitely many poles will also appear on the negative real axis. This is similar to the singular solutions $u_{\textrm{SIN}}(x;0)$ mentioned for the homogeneous PII. We wish to apply the Deift-Zhou nonlinear steepest descent method to study the asymptotic properties of these solutions in the near future.
\end{rmk}

\begin{rmk}
  In Theorem \ref{main-thm}, we show that $u_{\textrm{AS}}(x;\a)$ is pole-free on the real line for $\a \in (-1/2, 1/2)$. However, Claeys, Kuijlaars and Vanlessen \cite{Claeys2008} proved that $u_{\textrm{HM}}(x;\a)$ is pole-free on the real line for $\a > -1/2$. One may wonder whether it is possible to extend the validity range of the parameter $\a$ for $u_{\textrm{AS}}(x;\a)$. We believe $(-1/2, 1/2)$ is the largest possible range for $\a$ based on our proof as well as the numerical evidence in Fornberg and Weideman \cite{Fornberg2014}. In \cite[Fig. 5]{Fornberg2014}, the authors provided a very interesting pole counting diagrams in the $(u(0),u'(0))$-plane. In the diagrams, curves labeled $n^+$ denote initial conditions that generate solutions with $n$ poles on $\mathbb{R}^+$, while curves and regions labeled $n^-$ represent $n$ poles on $\mathbb{R}^-$. One can see that, only when $\a \in (-1/2, 1/2)$, the curve $0^+$ passes through the region $0^-$, which indicates the initial conditions corresponding to $u_{\textrm{AS}}(x;\a)$.
\end{rmk}

Similar to the purely imaginary solutions $u_{i\textrm{AS}}(x;0)$ in the homogeneous case, there also exist purely imaginary solutions $u_{i\textrm{AS}}(x;\a)$ when $\a \in i \mathbb{R}$.
Although there is a substantial literature on the Painlev\'e equations, we have not been able to find the following asymptotic results.

\begin{theorem}\label{main-thm-im}
Given $\a \in i\mathbb{R}$, there exists a one-parameter family of purely imaginary solution $u_{\textrm{iAS}}(x;\alpha)$ for $k\in i\mathbb{R}$ with the following properties:
\begin{itemize}

\item[(a)] $u_{i\textrm{AS}}(x;\alpha)$ has the following asymptotic behaviors:
\begin{equation}\label{asy-pos-im}
u_{i\textrm{AS}}(x;\alpha)= B(\alpha; x) +  k \Ai(x)(1+O(x^{-3/4})), \qquad \textrm{as} \,\, x \to +\infty
\end{equation}
and
\begin{equation}\label{asy-neg-im}
u_{i\textrm{AS}}(x)=\frac{d}{(-x)^{1/4}} \sin \{\frac{2}{3}(-x)^{3/2}-\frac{3}{4}d^2\ln(-x) + \phi\} +O(|x|^{-1}), \quad \textrm{as}\,\, x\to -\infty,
\end{equation}
where $B(\alpha; x)$ is defined by \eqref{asy-B(a;x)} and \eqref{a_n-rec-rel}.

\item[(b)] The connection formulas are given by
\begin{equation}\label{d-k-im}
d(k) = \frac{i}{\sqrt{\pi}} \sqrt{\ln(\cosh^2(\pi i \a)+|k|^2)}
\end{equation}
and
\begin{equation}\label{phi-k-im}
\phi(k) = -\frac{3}{2} d^2 \ln 2 +\arg \Gamma{\biggr(\frac{1}{2}id^2}\biggr)- \frac{\pi}{4} -\arg (-ik+i \sinh{(\pi i \a)}).
\end{equation}

\item[(c)] $u_{i\textrm{AS}}(x;\alpha)$ is pole-free on the real line.
\end{itemize}
\end{theorem}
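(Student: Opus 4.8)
The plan is to run the Deift--Zhou nonlinear steepest descent analysis on the Riemann--Hilbert (RH) problem for PII in exactly the same way as in the proof of Theorem~\ref{main-thm}, but starting from the Stokes data adapted to the purely imaginary symmetry. The Ablowitz--Segur reduction corresponds to setting one Stokes multiplier to zero (say $s_2=0$), so that the cyclic constraint collapses to $s_1+s_3=-2\sin(\pi\a)$. Writing $s_1=-\sin(\pi\a)-ik$ solves this constraint for both the real and the purely imaginary families, and in both cases the phase shift in the connection formula is $-\arg s_1$, which is precisely the term $-\arg(-ik+i\sinh(\pi i\a))$ in \eqref{phi-k-im} since $i\sinh(\pi i\a)=-\sin(\pi\a)$. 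What distinguishes the two families is the conjugation symmetry of the data: it reads $s_3=\overline{s_1}$ in the real case but $s_3=-\overline{s_1}$ when $\a,k\in i\mathbb{R}$. This sign flip is exactly what turns the real-case combination $\cos^2(\pi\a)-k^2=1-s_1s_3$ into $\cosh^2(\pi i\a)+|k|^2$ and forces $d$ in \eqref{d-k-im} to be purely imaginary. Once this reduction is fixed, most of the contour analysis can be transcribed from Theorem~\ref{main-thm}.

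First I would establish the $x\to+\infty$ asymptotics \eqref{asy-pos-im}. This part depends on $\a$ and $k$ only as parameters and is insensitive to the reality type of the data, so it follows from the same normalization and small-norm argument used for \eqref{asy-pos} and already available in Fokas et al. and Its--Kapaev. The slowly decaying background $B(\a;x)$ is the formal series solution of PII with $u\to0$: substituting $B(\a;x)\sim\frac{\a}{x}\sum_n a_n x^{-3n}$ into \eqref{PII-def} and matching powers of $x^{-1}$ (the factor $(3n+1)(3n+2)$ coming from the $u''$ term and the cubic convolution from $2u^3$) yields the recurrence \eqref{a_n-rec-rel}, while $k\Ai(x)$ is the exponentially small correction carried by the Stokes jump.

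The genus-zero analysis as $x\to-\infty$ is the technical core and the step I expect to be the main obstacle. I would introduce the $g$-function carrying the phase $\tfrac{2}{3}(-x)^{3/2}$, perform the $g$-function transformation and open lenses, and then control the transformed RH problem by a global outer parametrix together with local parametrices at the two turning points built from parabolic cylinder (equivalently confluent hypergeometric) functions. The matching of these local parametrices produces the factor $\Gamma(\tfrac{1}{2}id^2)$, hence the $\arg\Gamma$ term, while the amplitude $d/(-x)^{1/4}$ and the logarithmic phase $-\tfrac34 d^2\ln(-x)$ come from the slowly varying diagonal part of the $g$-function. Feeding the imaginary-case data $s_3=-\overline{s_1}$ through this matching is what yields \eqref{d-k-im} and \eqref{phi-k-im}, and because here $\cosh^2(\pi i\a)+|k|^2\ge1$ the constant $d$ is purely imaginary, which is precisely why the cosine of \eqref{asy-neg} is replaced by the sine in \eqref{asy-neg-im}. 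The delicate point is the bookkeeping of constant phases: one must pin down the branch choices in the $g$-function, the orientation of the lenses, and the arguments of all prefactors so that the several $\arg$ contributions combine into the stated closed form rather than merely up to an undetermined additive constant.

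Finally, the pole-free property (c) is comparatively soft in the purely imaginary case, in contrast to the genuine vanishing lemma required for the real Theorem~\ref{main-thm}(c). The symmetry $s_3=-\overline{s_1}$ of the data forces $u_{i\textrm{AS}}(x;\a)$ to be purely imaginary on the real line wherever it is finite; equivalently, one checks directly that $-\overline{u(x;\a)}$ again solves \eqref{PII-def} with the same monodromy data when $\a,k\in i\mathbb{R}$, so by uniqueness $\overline{u_{i\textrm{AS}}}=-u_{i\textrm{AS}}$ for real $x$. Since every pole of a PII transcendent is simple with residue $\pm1$, a real pole $x_0$ would give $u\sim\pm1/(x-x_0)$, whose real part blows up along the real axis and contradicts $\re u\equiv0$. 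Hence no real pole occurs, for every $k\in i\mathbb{R}$; this also explains why, unlike the real case, there is no restriction on the range of the parameter $k$.
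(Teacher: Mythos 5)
Your overall strategy coincides with the paper's own: the paper runs the Deift--Zhou analysis once, for the data $s_1=-\sin(\pi\a)-ik$, $s_2=0$, $s_3=-\sin(\pi\a)+ik$ of \eqref{stokes-real-im}, obtains the general oscillatory asymptotics \eqref{u-conjugate} (Lemma \ref{lemma-u-asy}), and then specializes; the real symmetry $s_3=\bar s_1$ produces $2\re\{\cdot\}$ (a cosine), while the purely imaginary symmetry $s_3=-\bar s_1$ produces $2i\,\im\{\cdot\}$, which is where the sine in \eqref{asy-neg-im} actually comes from. (Your explanation that the sine appears ``because $d$ is purely imaginary'' is loose: an imaginary amplitude multiplying a cosine would be purely imaginary too; the cosine-to-sine switch is forced by the relative phase of the two conjugate terms in \eqref{u-conjugate}, cf.\ \eqref{u-Re} versus \eqref{u-im}.) Your identification of the Stokes data, the characterization of purely imaginary solutions by $s_3=-\bar s_1$ (Theorem \ref{im-reduction-thm}), and your argument for (c) --- purely imaginary on $\mathbb{R}$ by uniqueness of the RH solution under $u\mapsto-\bar u$, so a real pole with residue $\pm1$ is impossible --- are exactly the paper's.

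There is, however, one concrete omission in your list of steepest-descent ingredients: you construct only the outer parametrix and local parametrices ``at the two turning points''. For inhomogeneous PII ($\a\neq0$) a third local parametrix at the origin is unavoidable: $\l=0$ is a branch point of $\Psi_\a$ with behavior $|\l|^{\pm\a}$ (cf.\ \eqref{Psi-origin-asy}--\eqref{Psi-origin-asy-alpha-neg}), so the deformed contour must pass through $z=0$; there $\re\tilde{\theta}=0$, the lens jump matrices do not decay, and the outer parametrix $N$ does not match the singularity of $T$. This is not a routine extra step: the model problem $L(\eta)$ at the origin admits no explicit solution, and the paper must prove its solvability through a separate vanishing lemma before the small-norm argument \eqref{Jr-t-order}--\eqref{est-R} can be run. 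If you literally transcribe the real-case analysis (which contains this construction), your proof goes through; but executed as written, your plan would stall at the error analysis near $z=0$. A second, smaller point: the $x\to+\infty$ part is not ``insensitive to the reality type of the data'' --- the background $u_1(x;\a)$ in the Its--Kapaev formula \eqref{its-kapa-for} corresponds to $s_1=-2\sin(\pi\a)$, $s_2=s_3=0$ and is itself neither real nor purely imaginary, so to obtain \eqref{asy-pos-im} one must take the imaginary part of \eqref{its-kapa-for}; your sketch skips this step.
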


\begin{rmk}
  In the above theorem, if one takes $\a=0$, it is straightforward to see that the asymptotic expansions and connection formulas are reduced to those for $u_{i\textrm{AS}}(x;0)$ listed in \eqref{PII0-iAS1}-\eqref{PII0-iAS-conn2}. Although the formulas \eqref{asy-pos-im}-\eqref{phi-k-im} and \eqref{PII0-iAS1}-\eqref{PII0-iAS-conn2} are very similar, we haven't found any other formal or rigorous results in the literature, except for the recent numerical simulations by  Fornberg and Weideman in \cite{Fornberg2015}.
\end{rmk}

The rest of the paper is arranged as follows. In Section \ref{sec:rhp}, we list the RH problem for PII and derive the conditions for the real and purely imaginary solutions. We also prove the vanishing lemma for the RH problem which is associated with the real solution $u_{\textrm{AS}}(x;\a)$ when $\a\in (-\frac{1}{2}, \frac{1}{2})$. In Section \ref{sec:thm2-bc:thm3}, we conduct the Deift-Zhou nonlinear steepest descent method for the RH problem of PII as $x \to -\infty$. Finally, in Section \ref{sec-main-proof}, we prove our main results.

\section{The RH problem for PII} \label{sec:rhp}

Let $L = \cup_{k=1}^6 l_k$  be the contour in the complex $\l$-plane, consisting of six rays oriented to infinity and
\begin{eqnarray*}
l_k: \quad \arg \l = \frac{\pi}{6} + \frac{(k-1)\pi}{3}, \qquad k = 1 , 2, \cdots, 6.
\end{eqnarray*}
Let $\Psi_\a(\l) = \Psi_\a(\l;x)$ be a $2 \times 2$ matrix-valued function, where the notation $x$ is usually suppressed for brevity. Then, the solution $u (x; \a)$ of PII is associated with the following RH problem for $\Psi_\a(\l)$; see Its and Kapaev \cite{Its2003}.
\begin{figure}[h]
\centering
  \includegraphics[width=8cm]{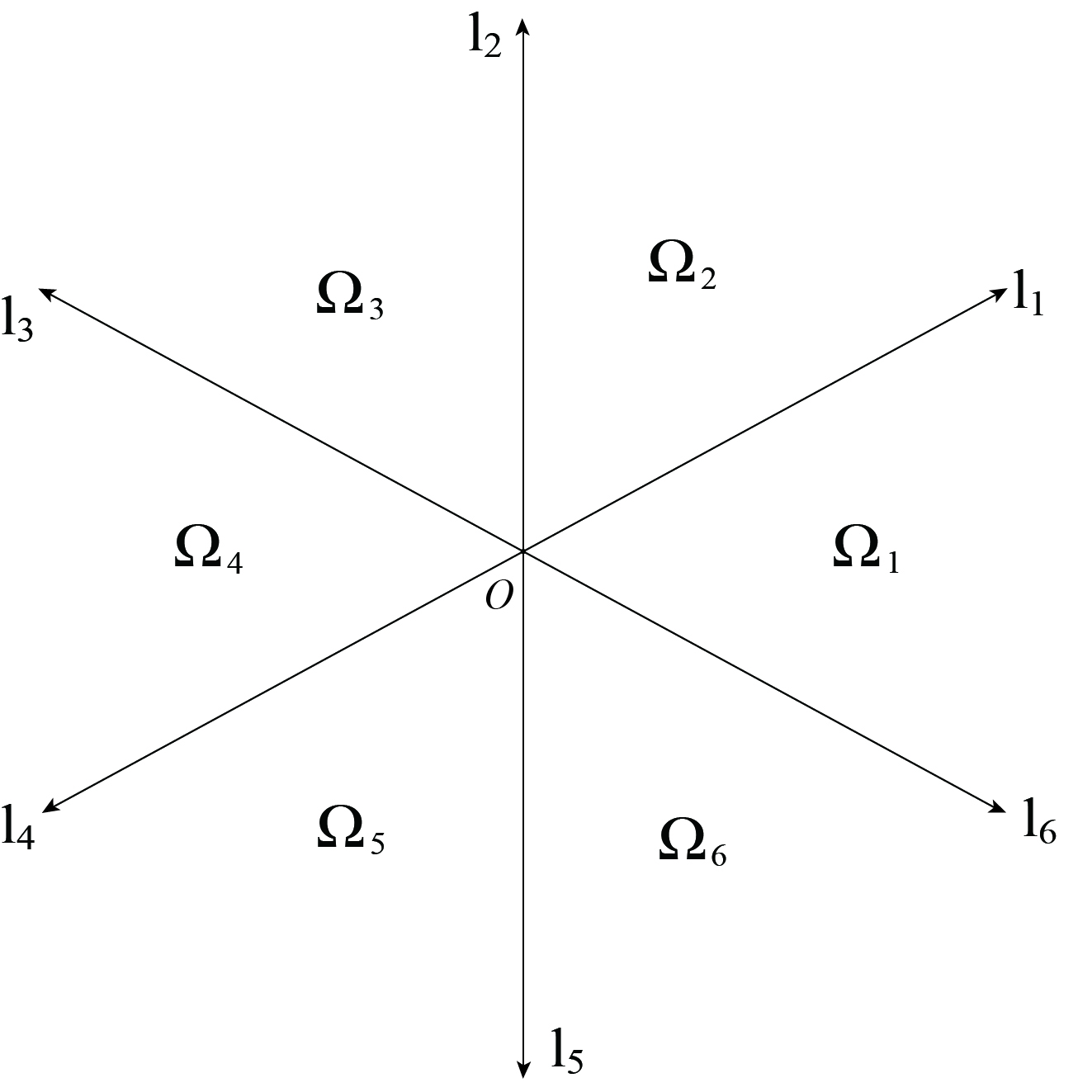}\\
  \caption{The contour $L$ for the RH problem associated with PII}\label{contour-Arno}
\end{figure}

\begin{itemize}
\item[(a)] $\Psi_\a(\l)$ is analytic for $\l \in \mathbb{C} \cut L$; see Figure \ref{contour-Arno};

\item[(b)]  Let $\Psi_{\a, \pm}(\l)$ denote the limiting values of $\Psi_\a(\l)$ as $\l$ tends to the contour $L$
from the left and right sides, respectively. Then, $\Psi_\a(\l)$ satisfies the following jump conditions:
\begin{equation} \label{Psi+Psi-}
\Psi_{\a, +}(\l)= \Psi_{\a, -} (\l) S_k, \qquad \textrm{for } \l \in l_k
\end{equation}
with
\begin{equation} \label{Psi-jumps}
 S_k = \left(
  \begin{array}{cc}
  1 & 0\\
    s_k & 1
  \end{array}
    \right), \quad k=1, 3, 5, \quad \textrm{and}   \quad S_k = \left(
  \begin{array}{cc}
  1 & s_k\\
    0 & 1
  \end{array}
    \right),   \quad k=2, 4, 6.
\end{equation}
The constants $s_k$'s are called the \emph{Stokes multipliers}, which satisfy the following constraints
    \begin{align}\label{iso-data-con}
     s_{k+3} = -s_k, \qquad s_1 - s_2 +s_3 + s_1 s_2 s_3  = - 2 \sin \pi\a;
    \end{align}

 \item[(c)] As $\l \to \infty$, $\Psi_\a(\l)$ has the following asymptotic behavior:
 \begin{equation}\label{Psi-infty-asy}
  \Psi_\a(\l) = \biggl[ I+ \frac{\Psi_{-1}(x)}{\l}+O(\l^{-2}) \biggr]e^{-\theta (\l)\sigma_3} , 
 \end{equation}
 where
 \begin{equation} \label{theta-def}
   \theta(\l)=\theta(\l;x) = i(\frac{4}{3}\l^3 + x\l)
 \end{equation}
 and $\sigma_3$ is the Pauli matrix $\left(\begin{array}{cc}
  1 & 0\\
    0 & -1
  \end{array}\right)$;

 \item[(d)] At $\l =0$, $\Psi_\a(\l)$ has the following behaviors:
 \begin{equation}\label{Psi-origin-asy}
  \Psi_\a(\l) = O\left(\begin{matrix}
  |\l|^{-\a} &  |\l|^{-\a}\\
   |\l|^{-\a} & |\l|^{-\a}
  \end{matrix}\right), \qquad \textrm{if } 0<  \re \alpha < \frac{1}{2}
\end{equation}
and
\begin{equation}\label{Psi-origin-asy-alpha-neg}
  \Psi_\a(\l) =
  O\left(\begin{matrix}
  |\l|^{\a} &  |\l|^{\a}\\
   |\l|^{\a} & |\l|^{\a}
  \end{matrix}\right), \qquad \textrm{if } -\frac{1}{2} < \re \alpha \leq 0.
\end{equation}
\end{itemize}

\begin{rmk}
  Since we only consider the real and purely imaginary Ablowitz-Segur solutions in this paper, it is enough for us to require $\re \a \in (-\frac{1}{2}, \frac{1}{2})$ in \eqref{Psi-origin-asy} and \eqref{Psi-origin-asy-alpha-neg}. Of course, one may consider $\a$ as a general complex constant. In this situation, the behavior near $\l = 0$ needs to be slightly modified. Typically, when $\a - \frac{1}{2} \in \mathbb{Z}$, a logarithmic singularity will take place; see Its and Kapaev \cite{Its2003}.
\end{rmk}

From the above RH problem for $\Psi_\a$, one may consider properties of the functions $ \D  \frac{\partial \Psi_\a}{\partial \l} \Psi_\a^{-1}$ and $  \D \frac{\partial \Psi_\a}{\partial x}\Psi_\a^{-1}$.
Because the jump matrices $S_k$ in \eqref{Psi-jumps} are independent of the variables $\l$ and $x$, these two functions are meromorphic functions in the complex $\l$-plane with only possible isolated singularities at $\l = 0$ and $\l = \infty$. Indeed, these two functions can be computed explicitly, which gives us the following \emph{Lax pair} for PII:
\begin{equation}\label{Lax-pair-A}
 \frac{\partial \Psi_\a}{\partial \lambda} = \mathcal{A}(\l) \Psi_\a, \quad \mathcal{A}(\l)=-i(4\lambda ^2 + x + 2u^2)\sigma_3 -(4\lambda u +\frac{\alpha}{\lambda}) \sigma_2 - 2u_x \sigma_1,
\end{equation}
and
\begin{equation}\label{Lax-pair-U}
  \frac{\partial \Psi_\a}{\partial x} = \mathcal{U}(\l) \Psi_\a, \quad \mathcal{U}(\l)=-i\lambda \sigma_3 - u\sigma_2,
\end{equation}
where $\sigma_1 = \left(\begin{array}{cc}
  0 & 1\\
    1 & 0
  \end{array}\right)$ and
  $\sigma_2 = \left(\begin{array}{cc}
  0 & -i\\
    i & 0
  \end{array}\right)$. Moreover, one can prove that
\begin{equation}
  u (x; \a) =2 \biggl( \Psi_{-1}(x) \biggr)_{12}
\end{equation}
is a solution of the PII equation \eqref{PII-def}; see Fokas et al. \cite[Theorem 5.1]{Fokas2006} for more details.

\begin{rmk}
The RH problem for $\Psi_\a$ in this section is a little different from those for PII in \cite[p.616]{Claeys2008} or \cite[Theorem 5.1]{Fokas2006}. If one makes the following transformation
\begin{equation}
\Psi^*(\l) =  e^{\frac{\pi i}{4}\sigma_3}\Psi_\a(\l)e^{-\frac{\pi i}{4}\sigma_3},
\end{equation}
then $\Psi^*(\l)$ satisfies the same RH problem as those in \cite{Claeys2008,Fokas2006}. Note that the above transformation changes the constants in \eqref{Psi-jumps} and \eqref{iso-data-con} such that $s_k^* =  (-1)^{k} i s_k$ and $\a^* = -\a$.
\end{rmk}

\subsection{The real and purely imaginary solutions of PII}

It is well-known that, the map
\begin{equation}
  \{(s_1, s_2, s_3 ) \in \mathbb{C}^3: s_1 - s_2 +s_3 + s_1 s_2 s_3  = - 2 \sin \pi\a \} \mapsto \{ \textrm{solutions of PII}\}
\end{equation}
is a bijection; for example, see \cite[Chap. 4]{Fokas2006}. This means that one can obtain some special solutions of PII \eqref{PII-def} by choosing certain special Stokes multipliers. In this paper, we will focus on the real and purely imaginary solutions $u(x;\a)$ when $\a \in (-\frac{1}{2} ,\frac{1}{2})$ and $\a \in i\mathbb{R}$, respectively. Therefore, we wish to know what kind of Stokes multipliers will lead to our desired real and purely imaginary solutions. For this purpose, let us first derive the following result.

\begin{prop}\label{Phi=Psi-prop}
    Let $\a\in( -\frac{1}{2}, \frac{1}{2})$ and $x \in \mathbb{R}$.  Then, $\Psi_\a(\l):=\sigma_1\overline{\Psi_\a(\bar{\l})}\sigma_1$ if and only if
    \begin{equation} \label{prop1-condition}
      s_3=\bar{s}_1 \qquad \textrm{and} \qquad  s_2=\bar{s}_2,
    \end{equation}
    where $s_k$'s are the Stokes multipliers given in \eqref{Psi-jumps} and \eqref{iso-data-con}.
\end{prop}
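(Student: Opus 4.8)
The plan is to exploit the uniqueness of the solution of the RH problem in Section \ref{sec:rhp}. Set $\widehat\Psi(\l):=\sigma_1\overline{\Psi_\a(\bar\l)}\sigma_1$ and verify that $\widehat\Psi$ solves a RH problem of exactly the same structure; the proposition then reduces to matching the two families of jump matrices. First I would record analyticity: the contour $L$ is invariant under the reflection $\l\mapsto\bar\l$ (the six rays are permuted among themselves), and $\l\mapsto\overline{\Psi_\a(\bar\l)}$ preserves holomorphy, so $\widehat\Psi$ is analytic on $\C\cut L$.

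The core computation is the jump relation. Under $\l\mapsto\bar\l$ the ray $l_k$ is mapped onto $l_{k'}$ with $1\leftrightarrow 6$, $2\leftrightarrow 5$, $3\leftrightarrow 4$; and since conjugation reverses the orientation of the plane while each ray stays oriented outward, the $+$ and $-$ sides are interchanged. Hence, for $\l\in l_k$, the boundary values obey $\widehat\Psi_\pm(\l)=\sigma_1\overline{\Psi_{\a,\mp}(\bar\l)}\,\sigma_1$, and substituting $\Psi_{\a,+}=\Psi_{\a,-}S_{k'}$ gives the jump $\widehat S_k=\sigma_1\,\overline{S_{k'}}^{\,-1}\sigma_1$. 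Using $\sigma_1\left(\begin{smallmatrix}a&b\\c&d\end{smallmatrix}\right)\sigma_1=\left(\begin{smallmatrix}d&c\\b&a\end{smallmatrix}\right)$, a direct $2\times 2$ calculation shows, for instance, that $\widehat S_1$ is lower triangular with off-diagonal entry $-\bar s_6$ and $\widehat S_2$ is upper triangular with off-diagonal entry $-\bar s_5$, and similarly for the remaining indices.

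For the other RH conditions I would use that $x\in\mathbb{R}$ gives $\overline{\theta(\bar\l)}=-\theta(\l)$, which together with $\sigma_1\sigma_3\sigma_1=-\sigma_3$ yields $\widehat\Psi(\l)=\bigl[I+\sigma_1\overline{\Psi_{-1}}\sigma_1/\l+O(\l^{-2})\bigr]e^{-\theta(\l)\sigma_3}$, so the normalization \eqref{Psi-infty-asy} is preserved; and since $|\bar\l|=|\l|$ and $\a$ is real, the estimate at $\l=0$ in \eqref{Psi-origin-asy}--\eqref{Psi-origin-asy-alpha-neg} is invariant as well. Thus $\widehat\Psi$ satisfies precisely the RH problem of Section \ref{sec:rhp} but with $\widehat S_k$ in place of $S_k$.

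It then remains to compare $\widehat S_k$ with $S_k$. Imposing $\widehat S_k=S_k$ for every $k$ and invoking the constraint $s_{k+3}=-s_k$ from \eqref{iso-data-con} collapses the six identities to the two relations $s_3=\bar s_1$ and $s_2=\bar s_2$. This settles the forward direction at once: if $\Psi_\a=\widehat\Psi$ then their prescribed jumps must agree, forcing \eqref{prop1-condition}. Conversely, assuming \eqref{prop1-condition} makes $\widehat\Psi$ a solution of the very RH problem satisfied by $\Psi_\a$, and uniqueness yields $\Psi_\a=\widehat\Psi$. I expect the only delicate point to be the bookkeeping of the contour reflection and the exchange of the $+/-$ sides in the jump step; the asymptotic checks and the final algebra are routine. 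The uniqueness needed for the converse follows from a standard Liouville argument: $\widehat\Psi\,\Psi_\a^{-1}$ extends analytically across $L$, has at worst an $O(|\l|^{-2|\a|})$ singularity at the origin (using $\det\Psi_\a\equiv 1$ to bound $\Psi_\a^{-1}$), which is removable because $2|\a|<1$, and it tends to $I$ at infinity, so it is identically $I$.
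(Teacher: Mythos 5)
Your proposal is correct and follows essentially the same route as the paper's proof: define $\sigma_1\overline{\Psi_\a(\bar\l)}\sigma_1$, use the reflection $l_k\leftrightarrow l_{7-k}$ with the $+/-$ sides exchanged to get the jump $\sigma_1\overline{S_{7-k}}^{\,-1}\sigma_1$, check the behavior at $\infty$ and $0$ via $\overline{\theta(\bar\l)}=-\theta(\l)$, and conclude both directions by matching jumps and invoking uniqueness. The only difference is that you spell out the Liouville uniqueness argument (via $\det\Psi_\a\equiv 1$ and removability of the $O(|\l|^{-2|\a|})$ singularity), which the paper simply cites as standard.
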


\begin{proof}

Let us denote
\begin{equation}
  \Phi_\a(\l) = \Phi_\a(\l; x) :=\sigma_1\overline{\Psi_\a(\bar{\l})}\sigma_1.
\end{equation}
First, we will show that the condition \eqref{prop1-condition} is sufficient for $\Phi_\a(\l) = \Psi_\a(\l)$. According to standard arguments based on Liouville's theorem, it is easily seen that the solution to the RH problem for $\Psi_\a$ must be unique. Then, it is enough to verify that $\Phi_\a$ satisfies the same RH problem as $\Psi_\a$.

Since the contour $L$ is symmetric about the real axis (see Figure \ref{contour-Arno}), it is obvious that $\Phi_\a(\l)$ is analytic for $\l \in \mathbb{C} \setminus L.$ Next, notice that
\begin{equation*}
  \l \in l_k \Longleftrightarrow \bar\l \in l_{7-k}, \qquad k = 1, 2, \cdots, 6
\end{equation*}
and
\begin{equation*}
  \Phi_{\a,\pm}(\l) = \sigma_1\overline{\Psi_{\a, \mp}(\bar{\l})} \sigma_1 \qquad \textrm{for } \l \in l_k.
\end{equation*}
Then, we have
\begin{equation} \label{Phi-jumps}
\Phi_{\a,-}^{-1}(\l) \Phi_{\a,+}(\l)= \sigma_1 \overline{\Psi_{\a, +}(\bar{\l}) }^{-1} \; \overline{\Psi_{\a, -}(\bar{\l}) }\sigma_1 = \sigma_1 \bar{S}_{7-k}^{-1}\sigma_1 \qquad \textrm{for } \l \in l_k.
\end{equation}
Recalling $s_{k+3} = - s_k$ in \eqref{iso-data-con} and the conditions $s_3=\bar{s}_1$ and $s_2= \bar{s}_2$ in \eqref{prop1-condition}, we obtain
$\sigma_1 \bar{S}_{7-k}^{-1}\sigma_1 = S_k$, which indicates that $\Phi_\a(\l)$ satisfies the same jump conditions as $\Psi_\a(\l)$ for $\l \in l_k$. Finally, as $\overline{\theta(\bar \l;x)} = - \theta(\l;x)$ for real $x$, it is easy to check that $\Phi_\a(\l)$ also has the same asymptotic behaviors as $\Psi_\a(\l)$ in \eqref{Psi-infty-asy}-\eqref{Psi-origin-asy-alpha-neg}. So, $\Phi_\a$ indeed satisfies the same RH problem as $\Psi_\a$.

Next, we will show that \eqref{prop1-condition} is also a necessary condition for $\Phi_\a(\l) = \Psi_\a(\l)$. From jump conditions \eqref{Psi+Psi-} and \eqref{Phi-jumps} for $\Psi_\a$ and $\Phi_\a$, we have $\sigma_1 \bar{S}_{7-k}^{-1}\sigma_1 = S_k$. Then, the explicit formulas of $S_k$'s in \eqref{Psi-jumps} and the relation in \eqref{iso-data-con} gives us \eqref{prop1-condition}.

This completes the proof of our proposition.
\end{proof}

With the above proposition, we are ready to prove the following necessary and sufficient condition for a real solution $u(x; \a)$ when $\a \in (-\frac{1}{2}, \frac{1}{2})$.
\begin{theorem}\label{real-reduction-thm}
    For $\a \in (-\frac{1}{2}, \frac{1}{2})$, the solution $u(x; \a)$ of PII \eqref{PII-def} is real for $x \in \mathbb{R}$ if and only if $s_3 = \bar{s}_1$ and $s_2=\bar{s}_2$.
\end{theorem}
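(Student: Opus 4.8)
The plan is to leverage the bijection between the Stokes data $(s_1,s_2,s_3)$ and PII solutions, together with the conjugation symmetry $\Phi_\a(\l):=\sigma_1\overline{\Psi_\a(\bar\l)}\sigma_1$ already introduced in Proposition \ref{Phi=Psi-prop}. The guiding principle is that this symmetry acts on the Stokes data by $(s_1,s_2,s_3)\mapsto(\bar s_3,\bar s_2,\bar s_1)$ and on the associated solution by $u\mapsto\bar u$; once this is established, reality of $u$ becomes equivalent to the two data sets coinciding, which is exactly \eqref{prop1-condition}.

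First I would record two facts about $\Psi_{-1}(x)$ in \eqref{Psi-infty-asy}. On one hand $u(x;\a)=2(\Psi_{-1})_{12}$. On the other hand, substituting the large-$\l$ expansion \eqref{Psi-infty-asy} into the $x$-equation \eqref{Lax-pair-U} and matching the $O(\l^0)$ term yields $[\sigma_3,\Psi_{-1}]=iu\sigma_2$, so that the two off-diagonal entries of $\Psi_{-1}$ coincide, $(\Psi_{-1})_{12}=(\Psi_{-1})_{21}=u/2$. Consequently $u$ is real for $x\in\mathbb R$ if and only if $(\Psi_{-1})_{12}=\overline{(\Psi_{-1})_{12}}$.

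Next I would identify the RH problem solved by $\Phi_\a$ and extract its solution. The jump computation in the proof of Proposition \ref{Phi=Psi-prop} shows that $\Phi_\a$ satisfies the PII jump $S_k^{\Phi}=\sigma_1\bar S_{7-k}^{-1}\sigma_1$ on $l_k$; reading off the entries and using $s_{k+3}=-s_k$ from \eqref{iso-data-con} gives the Stokes multipliers $(s_1^{\Phi},s_2^{\Phi},s_3^{\Phi})=(\bar s_3,\bar s_2,\bar s_1)$. Since $\bar s_1-\bar s_2+\bar s_3+\bar s_1\bar s_2\bar s_3=\overline{s_1-s_2+s_3+s_1s_2s_3}=\overline{-2\sin\pi\a}=-2\sin\pi\a$ for real $\a\in(-\tfrac12,\tfrac12)$, the constraint \eqref{iso-data-con} is preserved with the same $\a$, and the behaviors at $\l=0$ and $\l=\infty$ match as in Proposition \ref{Phi=Psi-prop}. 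Thus $\Phi_\a$ is the PII solution for the parameter $\a$ with data $(\bar s_3,\bar s_2,\bar s_1)$. From $\Phi_\a=[I+\sigma_1\overline{\Psi_{-1}}\sigma_1/\l+O(\l^{-2})]e^{-\theta\sigma_3}$ its associated solution is $2(\sigma_1\overline{\Psi_{-1}}\sigma_1)_{12}=2\overline{(\Psi_{-1})_{21}}=\overline{u}$, using the equality of off-diagonal entries above.

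Finally I would invoke the bijectivity of the map from Stokes data to solutions. The solution attached to $(s_1,s_2,s_3)$ is $u$ and that attached to $(\bar s_3,\bar s_2,\bar s_1)$ is $\bar u$; hence $u=\bar u$, i.e. $u$ is real on $\mathbb R$, if and only if $(s_1,s_2,s_3)=(\bar s_3,\bar s_2,\bar s_1)$, which is precisely $s_3=\bar s_1$ and $s_2=\bar s_2$. The main obstacle is the middle step: correctly tracking \emph{both} the transformed Stokes data and the recovered solution through the conjugation $\Phi_\a=\sigma_1\overline{\Psi_\a(\bar\l)}\sigma_1$. The reordering $(s_1,s_2,s_3)\mapsto(\bar s_3,\bar s_2,\bar s_1)$ must be matched carefully against $s_{k+3}=-s_k$, and the identification of the solution of $\Phi_\a$ as $\bar u$ hinges on the symmetric structure $(\Psi_{-1})_{12}=(\Psi_{-1})_{21}$; without it one would only obtain $(\Psi_{-1})_{12}=\overline{(\Psi_{-1})_{21}}$, which does not by itself force $u$ to be real.
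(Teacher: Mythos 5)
Your proposal is correct, but it takes a genuinely different route from the paper's. The paper argues through the $\lambda$-part of the Lax pair: reality of $u$ is shown to be equivalent to the symmetry $\sigma_1\overline{\mathcal{A}(\bar\lambda)}\sigma_1=\mathcal{A}(\lambda)$ of the coefficient matrix in \eqref{Lax-pair-A}; sufficiency follows by substituting the identity $\sigma_1\overline{\Psi_\alpha(\bar\lambda)}\sigma_1=\Psi_\alpha(\lambda)$ of Proposition \ref{Phi=Psi-prop} into the Lax equation, and necessity by observing that under this symmetry $\sigma_1\overline{\Psi_\alpha(\bar\lambda)}\sigma_1$ solves the same linear ODE with the same asymptotics at infinity, hence equals $\Psi_\alpha$, after which Proposition \ref{Phi=Psi-prop} converts the equality back into the Stokes conditions. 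You never establish $\sigma_1\overline{\Psi_\alpha(\bar\lambda)}\sigma_1=\Psi_\alpha(\lambda)$ as an intermediate step; instead you compute how the conjugation acts on the monodromy data, $(s_1,s_2,s_3)\mapsto(\bar s_3,\bar s_2,\bar s_1)$ (your bookkeeping against $s_{k+3}=-s_k$ is right), and on the extracted solution, $u\mapsto\bar u$, and then let injectivity of the monodromy map (which the paper cites from Fokas et al.\ rather than proves) do the work that the ODE-uniqueness argument does in the paper. Your route needs one ingredient the paper's proof does not use, namely $(\Psi_{-1})_{12}=(\Psi_{-1})_{21}=u/2$, which you correctly obtain from the $O(\lambda^0)$ matching in the $x$-equation \eqref{Lax-pair-U}; this is exactly what turns $2\bigl(\sigma_1\overline{\Psi_{-1}}\sigma_1\bigr)_{12}=2\overline{(\Psi_{-1})_{21}}$ into $\bar u$. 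What each approach buys: yours makes both directions of the equivalence nearly formal once the action on data and solutions is known, at the cost of invoking the cited bijection; the paper's is more self-contained, relying only on Liouville-type uniqueness for the RH problem and for the linear ODE with prescribed asymptotics. Two points you leave implicit but should state: the conjugated function is only defined and symmetric for real $x$, so the solution attached to $(\bar s_3,\bar s_2,\bar s_1)$ agrees with $\bar u$ only on $\mathbb{R}$, and one needs the identity theorem for meromorphic functions to upgrade real-axis agreement to global equality before applying injectivity; and reality of $\alpha$ is used precisely to keep the constraint \eqref{iso-data-con} invariant under conjugation. Neither is a gap; both are one-line remarks.
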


\begin{proof}

First of all, we note that, when $\a \in (-\frac{1}{2}, \frac{1}{2})$,
\begin{equation} \label{u-real-for1}
  u(x; \a) \textrm{ is real for }  x \in \mathbb{R} \Longleftrightarrow \sigma_1 \overline{\mathcal{A}(\bar{\l})} \sigma_1 = \mathcal{A}(\l);
\end{equation}
see the definition of $\mathcal{A}(\l)$ in \eqref{Lax-pair-A}. Moreover, the following formula is also useful in our subsequent proof:
\begin{equation} \label{u-real-for2}
    \frac{\partial{[\sigma_1 \overline{\Psi_\a(\bar \l})} \sigma_1]}{\partial \l}= \sigma_1\overline{\mathcal{A}(\bar \l)}\sigma_1 [\sigma_1 \overline{\Psi_\a(\bar{\l})}\sigma_1],
\end{equation}
which comes from \eqref{Lax-pair-A} and the fact $\D\overline{\left. \frac{\partial[{\Psi}_\a(\l)]}{\partial{\l}}\right|_{\l=\bar{\l}}}=\frac{\partial [\overline{\Psi_\a(\bar{\l})} ]}{\partial \l}$.

Let us first prove the sufficient part. If $s_3=\bar{s}_1$ and $s_2=\bar{s}_2$, Proposition \ref{Phi=Psi-prop} gives us $\sigma_1\overline{\Psi_\a(\bar{\l})}\sigma_1 = \Psi_\a(\l)$.
Replacing $\Psi_\a(\l)$ in \eqref{Lax-pair-A} by $\sigma_1\overline{\Psi_\a(\bar{\l})}\sigma_1$, we have
\begin{equation}\label{Phi-A}
    \frac{\partial {[\sigma_1\overline{\Psi_\a(\bar \l)}\sigma_1]}}{\partial \l} = \mathcal{A}(\l)[\sigma_1\overline{\Psi_\a(\bar \l)}\sigma_1].
\end{equation}
Then, the above formula and \eqref{u-real-for2} yield $ \sigma_1 \overline{\mathcal{A}(\bar{\l})}\sigma_1=\mathcal{A}(\l).$ Thus, $u(x; \a)$ is a real solution when $x\in \mathbb{R}$; see \eqref{u-real-for1}.

Next, we prove the necessary part. If the solution $u(x; \a)$ is a real solution for $x\in \mathbb{R}$, we have $ \sigma_1 \overline{\mathcal{A}(\bar{\l})}\sigma_1=\mathcal{A}(\l)$. It then follows from \eqref{Lax-pair-A} and \eqref{u-real-for2} that $\sigma_1\overline{\Psi_\a(\bar \l)}\sigma_1$ satisfies the same differential equation as $\Psi_\a(\l)$. As $\overline{\theta(\bar \l;x)} = - \theta(\l;x)$ for real $x$, we have
\begin{equation}
\sigma_1\overline{\Psi_\a(\bar{\l})}\sigma_1 = (I + O(\l^{-1}))e^{-\theta(\l)\sigma_3}, \qquad \textrm{as } \l \to \infty,
\end{equation}
which agrees with \eqref{Psi-infty-asy}. So, $\sigma_1\overline{\Psi_\a(\bar{\l})}\sigma_1$ also satisfies the same boundary condition as $\Psi_\a(\l)$. Therefore, we have $\sigma_1\overline{\Psi_\a(\bar{\l})}\sigma_1 = \Psi_\a(\l)$. Using Proposition \ref{Phi=Psi-prop} again, we obtain $s_3=\bar{s}_1$ and $s_2=\bar{s}_2$.

This finishes the proof of Theorem \ref{real-reduction-thm}.
\end{proof}

Similarly, we also derive the necessary and sufficient condition for a purely imaginary solution $u(x; \a)$ when $\a \in i \mathbb{R}$.
\begin{theorem}\label{im-reduction-thm}
    For $\a\in i \mathbb{R}$, the solution $u(x; \a)$ of PII \eqref{PII-def} is purely imaginary for $x \in \mathbb{R}$ if and only if $s_3 = -\bar{s}_1$ and $s_2=-\bar{s}_2$.
\end{theorem}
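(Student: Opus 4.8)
The plan is to mirror the proof of Theorem \ref{real-reduction-thm}, replacing the involution $\Psi \mapsto \sigma_1\overline{\Psi(\bar\l)}\sigma_1$ by $\Psi \mapsto \sigma_2\overline{\Psi(\bar\l)}\sigma_2$. I would set $\Phi_\a(\l) := \sigma_2\overline{\Psi_\a(\bar\l)}\sigma_2$. The argument rests on two facts analogous to \eqref{u-real-for1}--\eqref{u-real-for2}: first, that for $\a\in i\mathbb{R}$ the solution $u(x;\a)$ is purely imaginary on $\mathbb{R}$ if and only if $\sigma_2\overline{\mathcal{A}(\bar\l)}\sigma_2 = \mathcal{A}(\l)$; and second, the differentiation identity $\partial_\l\Phi_\a = [\sigma_2\overline{\mathcal{A}(\bar\l)}\sigma_2]\,\Phi_\a$, which follows from \eqref{Lax-pair-A} exactly as \eqref{u-real-for2} does.

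First I would establish the symmetry characterization of $\mathcal{A}$. Using $\sigma_2\sigma_3\sigma_2 = -\sigma_3$, $\sigma_2\sigma_1\sigma_2 = -\sigma_1$, $\overline{\sigma_2} = -\sigma_2$, $\overline{\sigma_3}=\sigma_3$, $\overline{\sigma_1}=\sigma_1$, together with $\bar x = x$, a direct computation from \eqref{Lax-pair-A} gives
\[
\sigma_2\overline{\mathcal{A}(\bar\l)}\sigma_2 = -i(4\l^2 + x + 2\bar u^2)\sigma_3 + (4\l\bar u + \bar\a/\l)\sigma_2 + 2\overline{u_x}\,\sigma_1.
\]
Comparing with $\mathcal{A}(\l)$ and using $\bar\a = -\a$ (valid since $\a\in i\mathbb{R}$), the right-hand side equals $\mathcal{A}(\l)$ precisely when $\bar u = -u$, i.e. when $u$ is purely imaginary; the $\sigma_3$- and $\sigma_1$-entries are then automatically consistent, since $\bar u = -u$ forces $\bar u^2 = u^2$ and $\overline{u_x} = -u_x$. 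This is the purely imaginary analog of \eqref{u-real-for1}, and it is exactly here that $\sigma_2$ rather than $\sigma_1$ is forced.

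Next I would prove the analog of Proposition \ref{Phi=Psi-prop}: for $\a\in i\mathbb{R}$ and $x\in\mathbb{R}$, one has $\Phi_\a = \Psi_\a$ if and only if $s_3 = -\bar s_1$ and $s_2 = -\bar s_2$. By uniqueness of the RH solution it suffices to check that $\Phi_\a$ solves the same RH problem. Analyticity off $L$ and the relation $\Phi_{\a,\pm}(\l) = \sigma_2\overline{\Psi_{\a,\mp}(\bar\l)}\sigma_2$ give $\Phi_{\a,-}^{-1}\Phi_{\a,+} = \sigma_2\overline{S_{7-k}}^{-1}\sigma_2$ on $l_k$; using $s_{k+3} = -s_k$ one then verifies that $\sigma_2\overline{S_{7-k}}^{-1}\sigma_2 = S_k$ holds exactly when $s_3 = -\bar s_1$ and $s_2 = -\bar s_2$. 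The normalization at $\infty$ is preserved because $\sigma_2$ anticommutes with $\sigma_3$ and $\overline{\theta(\bar\l;x)} = -\theta(\l;x)$, so that $\sigma_2 e^{\theta\sigma_3}\sigma_2 = e^{-\theta\sigma_3}$; the behavior at $\l=0$ is preserved because $\re\a = 0$ renders the bounds in \eqref{Psi-origin-asy}--\eqref{Psi-origin-asy-alpha-neg} invariant under $\l\mapsto\bar\l$, complex conjugation, and two-sided multiplication by $\sigma_2$.

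Finally, the two directions follow as in Theorem \ref{real-reduction-thm}. For sufficiency, $s_3=-\bar s_1$ and $s_2 = -\bar s_2$ give $\Phi_\a = \Psi_\a$; substituting into \eqref{Lax-pair-A} and comparing with the differentiation identity yields $\sigma_2\overline{\mathcal{A}(\bar\l)}\sigma_2 = \mathcal{A}(\l)$, hence $u$ is purely imaginary. For necessity, if $u$ is purely imaginary then $\sigma_2\overline{\mathcal{A}(\bar\l)}\sigma_2 = \mathcal{A}(\l)$, so $\Phi_\a$ solves the same linear ODE as $\Psi_\a$ and shares its $\l\to\infty$ normalization; uniqueness forces $\Phi_\a = \Psi_\a$, and the proposition returns the Stokes conditions. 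The only genuinely new point, and the step I would watch most carefully, is the algebraic bookkeeping establishing $\sigma_2\overline{S_{7-k}}^{-1}\sigma_2 = S_k$ along with the check that $\sigma_2$ still respects both the $\infty$- and $0$-normalizations; everything else is a sign-tracked transcription of the real case.
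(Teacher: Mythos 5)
Your proposal is correct and takes essentially the same approach as the paper: the paper's own proof of Theorem \ref{im-reduction-thm} consists precisely of the remark that one should study $\sigma_2\overline{\Psi_\alpha(\bar{\lambda})}\sigma_2$ for $\alpha \in i\mathbb{R}$ and repeat the arguments of Proposition \ref{Phi=Psi-prop} and Theorem \ref{real-reduction-thm}. Your write-up supplies the sign-tracking details the paper leaves implicit (the $\sigma_2$-conjugation of $\mathcal{A}$, the identity $\sigma_2\overline{S_{7-k}}^{-1}\sigma_2=S_k$ under $s_3=-\bar{s}_1$, $s_2=-\bar{s}_2$, and the preservation of the normalizations at $\infty$ and $0$), and these checks are all correct.
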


\begin{proof}
  Comparing with Proposition \ref{Phi=Psi-prop} and Theorem \ref{real-reduction-thm}, one may consider the properties of the function $\sigma_2\overline{\Psi_\a(\bar{\l})}\sigma_2$ when $\a \in i\mathbb{R}$. Then, the rest of the proof is very similar to the analysis in the proofs of Proposition \ref{Phi=Psi-prop} and Theorem \ref{real-reduction-thm}.
\end{proof}

\begin{rmk}
  Indeed, the real and purely imaginary Ablowitz-Segur solutions $u_{\textrm{AS}}(x;\a)$ and $u_{i\textrm{AS}}(x;\alpha)$ correspond to some specific Stokes multipliers. In both cases, $s_2$ is equal to 0. Then, the above two theorems and \eqref{iso-data-con} yield
  \begin{align}\label{stokes-real-im}
    s_1 =  -\sin(\pi \a) - ik, \quad s_2 = 0, \quad  s_3 = -\sin(\pi \a) + ik.
  \end{align}
Note that, when the parameters $\a$ and $k$ satisfy the following conditions
\begin{equation}\label{real-alpha-k}
 \a \in (-\frac{1}{2}, \frac{1}{2}), \qquad k \in (-\cos(\pi \a),  \cos(\pi \a))
\end{equation}
and
\begin{equation}\label{im-alpha-k}
\a \in i\mathbb{R}, \qquad k \in i\mathbb{R},
\end{equation}
the Stokes multipliers in \eqref{stokes-real-im} give us the real solution $u_{\textrm{AS}}(x; \a)$ and the purely imaginary solution  $u_{i\textrm{AS}}(x;\alpha)$, respectively. The above conditions for $\a$ and $k$ are the same as those in Theorems \ref{main-thm} and \ref{main-thm-im}.
\end{rmk}

\subsection{Solvability of the RH Problem for $\Psi_\a(\l)$} 

Before we conduct a nonlinear steepest descent analysis for the RH problem of $\Psi_\a$, let us first prove the following vanishing lemma for the Stokes multipliers given in \eqref{stokes-real-im} under the condition \eqref{real-alpha-k}. This lemma is crucial to prove that the real solution $u_{\textrm{AS}}(x;\a)$ is pole-free on the real axis. The proof is similar to those in \cite{Claeys2008, Fokas1992}. Note that we do not need a vanishing lemma to show that the purely imaginary solution $u_{i\textrm{AS}}(x;\a)$ is pole-free on the real axis. This is due to the fact that the residues of all poles of PII transcendents are 1 or $-1$.

\begin{lemma} \label{lemma-vanishing-1}
Let $\a \in (-\frac{1}{2}, \frac{1}{2})$, $ k \in (-\cos(\pi \a),  \cos(\pi \a))$ and the Stokes multipliers be given in \eqref{stokes-real-im}. Suppose that $\Psi^{(1)}_\a(\l)$ satisfies the same jump conditions \eqref{Psi+Psi-} and the same boundary conditions \eqref{Psi-origin-asy} and \eqref{Psi-origin-asy-alpha-neg} as $\Psi_\a(\l)$, except that the behavior \eqref{Psi-infty-asy} at infinity being altered to
\begin{equation}
\Psi^{(1)}_\a(\l) = O\left(\frac{1}{\l}\right)e^{-\theta(\l)\sigma_3}, \qquad \textrm{as} \quad \l \to\infty.
\end{equation}
Then $\Psi^{(1)}_\a(\l)$ is trivial, that is, $\Psi^{(1)}_{\a}(\l) \equiv 0$.
\end{lemma}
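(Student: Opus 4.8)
The plan is to establish the desired triviality through a ``vanishing lemma'' of Fokas--Zhou type, exactly as in \cite{Claeys2008,Fokas1992}: from the homogeneous solution $\Psi^{(1)}_\a$ one manufactures an auxiliary matrix that is analytic in a half-plane, decays at infinity, and restricts to a positive semi-definite Hermitian matrix on $\mathbb{R}$, so that a contour-integral (energy) identity forces it to vanish. Concretely, I would set
\begin{equation*}
  H(\l) := \Psi^{(1)}_\a(\l)\,\overline{\Psi^{(1)}_\a(\bar\l)}^{\,T},
\end{equation*}
and record two structural facts. First, $H(\bar\l)=\overline{H(\l)}^{\,T}$, so $H$ is Hermitian on the real axis; moreover for $\l\in\mathbb{R}$ one has $\bar\l=\l$, whence $H(\l)=\Psi^{(1)}_\a(\l)\,\overline{\Psi^{(1)}_\a(\l)}^{\,T}$ is positive semi-definite. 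Second, since $\overline{\theta(\bar\l;x)}=-\theta(\l;x)$ for real $x$ (see \eqref{theta-def}), the two exponential factors $e^{-\theta\sigma_3}$ coming from the prescribed behavior at infinity cancel, so that $H(\l)=O(\l^{-2})$ as $\l\to\infty$.

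Next I would check that $H$ extends analytically across the whole contour. Across $\mathbb{R}$ (which is not part of $L$) both one-sided limits equal $\Psi^{(1)}_\a\,\overline{\Psi^{(1)}_\a}^{\,T}$, so $H$ is analytic there. Across each ray $l_k$ I would use that $\l\in l_k\Leftrightarrow\bar\l\in l_{7-k}$ together with the symmetric Stokes data \eqref{stokes-real-im}, namely $s_1=-\sin\pi\a-ik$, $s_2=0$, $s_3=\bar s_1=-\sin\pi\a+ik$, to compute the jump of $H$ explicitly from \eqref{Psi+Psi-}--\eqref{iso-data-con}. At the origin, the boundary conditions \eqref{Psi-origin-asy}--\eqref{Psi-origin-asy-alpha-neg} give each factor the bound $O(|\l|^{-|\re\a|})$, so $H(\l)=O(|\l|^{-2|\re\a|})$; because $|\re\a|<\tfrac12$ this is integrable and the isolated singularity at $\l=0$ is removable. (The same reasoning shows $\det\Psi^{(1)}_\a$ is entire and $O(\l^{-2})$ at infinity, hence $\equiv 0$; this is harmless but already indicates that $\Psi^{(1)}_\a$ is everywhere of rank $\le 1$.)

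The heart of the argument is a positivity identity. Fix $v\in\C^2$ and apply Cauchy's theorem to the scalar $v^{\dagger}H(\l)v$ over the boundary of the upper half-plane: the large semicircle contributes nothing by the $O(\l^{-2})$ decay, the small circle at the origin contributes nothing by integrability, and the only surviving terms are $\int_{\mathbb{R}}v^{\dagger}\Psi^{(1)}_\a\overline{\Psi^{(1)}_\a}^{\,T}v\,d\l\ge 0$ together with the jump contributions along $l_1$ and $l_3$ (the ray $l_2$ carries $S_2=I$). Using $s_3=\bar s_1$ and the symmetry $H(\bar\l)=\overline{H(\l)}^{\,T}$, I would reorganize these ray integrals into a single Hermitian form whose sign is governed by
\begin{equation*}
 1-|s_1|^2 = \cos^2(\pi\a)-k^2,
\end{equation*}
which is strictly positive exactly under the hypothesis $k\in(-\cos\pi\a,\cos\pi\a)$ recorded in \eqref{real-alpha-k}. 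Definiteness then forces the nonnegative boundary integral and the ray contributions to vanish simultaneously, so $\Psi^{(1)}_{\a,+}\equiv 0$ on $\mathbb{R}$; analytic continuation off the contour gives $\Psi^{(1)}_\a\equiv 0$.

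I expect the last step to be the main obstacle, and the difficulty is intrinsic. The jump matrices $S_k$ are unipotent rather than positive, and one checks that no constant Hermitian conjugation can make $H$ simultaneously analytic across the rays and positive on $\mathbb{R}$: the only matrix rendering $H$ entire is proportional to $\sigma_3$, which is indefinite and yields merely $\Psi^{(1)}_\a\,\sigma_3\,\overline{\Psi^{(1)}_\a(\bar\l)}^{\,T}\equiv 0$, too weak to conclude. Hence the positivity must be extracted from the contour-integral identity, in which the jump across $l_1$ couples $\l$ to its mirror $\bar\l\in l_6$; the bookkeeping that turns this nonlocal coupling into a definite form controlled by $\cos^2\pi\a-k^2>0$ is precisely where the parameter restriction enters and is the crux of the proof. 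By contrast, the role of $|\re\a|<\tfrac12$ is comparatively minor, entering only to guarantee removability at $\l=0$.
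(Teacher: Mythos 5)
Your proposal assembles the right general ingredients of a Fokas--Zhou vanishing argument (the Hermitian product $H(\l)=\Psi^{(1)}_\a(\l)\,\overline{\Psi^{(1)}_\a(\bar\l)}^{\,T}$, the cancellation of exponentials from $\overline{\theta(\bar\l)}=-\theta(\l)$, integrability at the origin from $|\re\a|<\tfrac12$), but it stops exactly at the step that carries the content of the lemma, and you say so yourself: you never show how the ray contributions along $l_1$ and $l_3$ yield a definite Hermitian form. This is a genuine gap, not bookkeeping. Working directly with $\Psi^{(1)}_\a$, the jump of $H$ across $l_1$ is (up to orientation conventions) $H_+-H_-=\Psi^{(1)}_{\a,-}(\l)\,\bigl(S_1^2-I\bigr)\,\overline{\Psi^{(1)}_{\a,+}(\bar\l)}^{\,T}$, where one uses $\overline{S_6}^{\,T}=S_1^{-1}$ (which follows from $s_6=-s_3$ and $s_3=\bar s_1$). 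Since $S_1^2-I=2s_1\left(\begin{smallmatrix}0&0\\ 1&0\end{smallmatrix}\right)$, this is a rank-one expression coupling the second column of $\Psi^{(1)}_{\a,-}$ at $\l\in l_1$ to the first column of $\Psi^{(1)}_{\a,+}$ at the mirror point $\bar\l\in l_6$. These are two unrelated unknown vector functions, so the ray integrals are not quadratic forms in a single unknown and carry no sign; your proposed ``reorganization into a single Hermitian form governed by $1-|s_1|^2$'' is precisely the assertion that needs proof, and I do not see how to obtain it in this formulation.

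The paper removes this obstacle \emph{before} forming any Hermitian product: it first performs the sector-wise transformation \eqref{vanishing-psi2-psi1}, $\Psi^{(1)}_\a\mapsto\Psi^{(2)}_\a$, multiplying by $e^{\theta(\l)\sigma_3}$ and by the Stokes matrices $S_1$, $S_3^{-1}$, $S_4$, $S_6^{-1}$ in the sectors adjacent to the rays (recall $S_2=S_5=I$ since $s_2=0$). This collapses all jumps onto $\mathbb{R}$, with jump matrix $J=e^{-\theta\sigma_3}M e^{\theta\sigma_3}$ where $M$ is one of the constant matrices in \eqref{Psi2-J-neg}--\eqref{Psi2-J-pos}. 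Only then does one form $\Psi^{(3)}_\a(\l)=\Psi^{(2)}_\a(\l)\,\Psi^{(2)}_\a(\l)^{*}$ and apply Cauchy's theorem in the upper half-plane: now every term lives on $\mathbb{R}$, where $\bar\l=\l$, so after adding the Hermitian conjugate of $\int_{\mathbb{R}}\Psi^{(2)}_{\a,+}\Psi^{(2)}_{\a,-}{}^{*}\,d\l=0$ one obtains $\int_{\mathbb{R}}\Psi^{(2)}_{\a,-}\,(J+J^{*})\,\Psi^{(2)}_{\a,-}{}^{*}\,d\l=0$ with $J+J^{*}=\mathrm{diag}\bigl(2(1-s_1s_3),\,2\bigr)$, because $\theta$ is purely imaginary on $\mathbb{R}$ and $s_3=\bar s_1$ kills the off-diagonal entries. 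Positive definiteness, i.e.\ $1-|s_1|^2=\cos^2(\pi\a)-k^2>0$, then forces $\Psi^{(2)}_{\a,-}\equiv0$, hence $\Psi^{(2)}_\a\equiv0$ and $\Psi^{(1)}_\a\equiv0$. In short, the missing idea is the contour deformation that trades the unipotent ray jumps (which, as you correctly observed, admit no compatible constant positive conjugation) for a single jump on $\mathbb{R}$ whose symmetrization is positive definite exactly under the hypothesis on $(\a,k)$; without that step your outline cannot be closed.
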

\begin{proof}
We note that $S_2=S_5=I$ when $s_2=0$, so there is no jump on $l_2$ and $l_5$. First, we introduce the following transformation $\Psi^{(1)}_\a \to \Psi^{(2)}_\a$, such that the exponential factor at infinity is removed and $\Psi^{(2)}_\a$ only possesses jumps on $\mathbb{R}$:
\begin{equation} \label{vanishing-psi2-psi1}
\Psi^{(2)}_\a(\l) :=   \begin{cases}\D
\Psi^{(1)}_\a(\l)e^{\theta(\l)\sigma_3}, &\quad \l \in \Omega_2 \cup \Omega_3 \cup \Omega_5 \cup \Omega_6,\\
\Psi^{(1)}_\a(\l)S_1 e^{\theta(\l)\sigma_3}, &\quad \l \in \Omega_1\cap \mathbb{C}_{+},\\
\Psi^{(1)}_\a(\l)S^{-1}_{3} e^{\theta(\l)\sigma_3}, &\quad \l \in \Omega_4 \cap \mathbb{C}_{+},\\
\Psi^{(1)}_\a(\l)S_{4} e^{\theta(\l)\sigma_3}, &\quad  \l \in \Omega_4 \cap \mathbb{C}_{-}, \\
\Psi^{(1)}_\a(\l)S^{-1}_{6} e^{\theta(\l)\sigma_3}, &\quad  \l \in \Omega_1 \cap \mathbb{C}_{-}.
\end{cases}
\end{equation}
It is easy to verify that $\Psi^{(2)}_\a(\l)$ solves a RH problem as follows:
\begin{itemize}
\item[(a)] $\Psi^{(2)}_\a(\l)$ is analytic for $\l \in \mathbb{C} \cut \mathbb{R}$;

\item[(b)] $\Psi^{(2)}_\a(\l)$ satisfies the following jump relations on $\mathbb{R}\cut \{0\}$,
\begin{eqnarray}
\Psi^{(2)}_{\a,+}(\l)=\Psi^{(2)}_{\a,-}(\l)e^{-\theta(\l)\sigma_3} \left(\begin{matrix}
1-s_1 s_3 & s_1\\
-s_3 & 1
\end{matrix}\right)e^{\theta(\l)\sigma_3}, \quad \l \in (-\infty, 0),\label{Psi2-J-neg}\\
\Psi^{(2)}_{\a,+}(\l)=\Psi^{(2)}_{\a,-}(\l)e^{-\theta(\l)\sigma_3}\left(\begin{matrix}
1-s_1 s_3 & -s_3\\
s_1 & 1
\end{matrix}\right)e^{\theta(\l)\sigma_3}, \quad \l \in (0, \infty)\label{Psi2-J-pos};
\end{eqnarray}

\item[(c)]  As $\l \to \infty$, $\Psi^{(2)}_\a(\l)= \D O\left(\frac{1}{\l}\right) $;

\item[(d)]  At $\l = 0$, $\Psi^{(2)}_\a(\l)$ has the following behavior:
\begin{equation}
\Psi^{(2)}_\a(\l) = O\left( \begin{matrix}
                    |\l|^{-\a} & |\l|^{-\a}\\
                    |\l|^{-\a} & |\l|^{-\a}
         \end{matrix}\right), \qquad \textrm{if } 0<\alpha < \frac{1}{2}
\end{equation}
and
\begin{equation} \label{Psi2-origin}
\Psi^{(2)}_\a(\l) = O\left( \begin{matrix}
                    |\l|^{\a} & |\l|^{\a}\\
                    |\l|^{\a} & |\l|^{\a}
         \end{matrix}\right), \qquad \textrm{if } -\frac{1}{2} < \alpha \leq 0.
\end{equation}

\end{itemize}

Next, let us consider the function
\begin{equation}
\Psi^{(3)}_\a(\l)=\Psi^{(2)}_\a(\l)\Psi^{(2)}_\a(\l)^{*}  \qquad \textrm{for} \quad \l \in \mathbb{C}\cut \mathbb{R},
\end{equation}
where $\Psi^{(2)}_\a(\l)^{*}$ denotes the Hermitian conjugate of the matrix $\Psi^{(2)}_\a(\l)$. From the conditions (c) and (d) of the above RH problem for $\Psi^{(2)}(\l)$, we have the following behaviors for $\Psi^{(3)}_\a(\l)$:
\begin{equation}
\Psi^{(3)}_\a(\l) = \begin{cases}
  O\left( \begin{matrix}
                    |\l|^{-2\a} & |\l|^{-2\a}\\
                    |\l|^{-2\a} & |\l|^{-2\a}
         \end{matrix}\right), & \textrm{if } 0<\alpha < \frac{1}{2}, \\
         O\left( \begin{matrix}
                    |\l|^{2\a} & |\l|^{2\a}\\
                    |\l|^{2\a} & |\l|^{2\a}
         \end{matrix}\right), & \textrm{if } -\frac{1}{2} < \alpha \leq 0,
\end{cases}  \qquad \textrm{as } \l \to 0
\end{equation}
and
\begin{equation}
\Psi^{(3)}_\a(\l)= O\left(\frac{1}{\l^2}\right), \qquad \textrm{as } \l \to \infty.
\end{equation}
From the above two formulas, one can see that each entry of $\Psi^{(3)}_\a(\l)$ has an integrable singularity at $\l = 0$ and $\l = \infty$. Thus, recalling the fact that $\Psi^{(3)}_\a(\l)$ is analytic in the upper half plane, we have $ \D\int_{\mathbb{R}} \Psi^{(3)}_{\a,+}(\l) d \l =0$ from Cauchy's theorem, which means
\begin{equation}\label{Cau-for-Psi2}
\int_{\mathbb{R}} \Psi^{(2)}_{\a,+}(\l) \Psi^{(2)}_{\a,-}(\l)^{*} d \l =0.
\end{equation}
Adding its Hermitian conjugate to the above formula, we obtain
\begin{equation}
\int_{\mathbb{R}} \left[ \Psi^{(2)}_{\a,+}(\l) \Psi^{(2)}_{\a,-}(\l)^{*} + \Psi^{(2)}_{\a,-}(\l) \Psi^{(2)}_{\a,+}(\l)^{*} \right] d\l =0.
\end{equation}
With the jump matrices in \eqref{Psi2-J-neg} and \eqref{Psi2-J-pos}, we have
\begin{equation}
\int_{\mathbb{R}} \left[ \Psi^{(2)}_{\a,-}(\l)\left( \begin{matrix}
                    2(1-s_1 s_3) & 0\\
                    0 & 2
         \end{matrix}\right)\Psi^{(2)}_{\a,-}(\l)^{*}\right] d\l =0,
\end{equation}
where the relations $s_3=\bar{s}_1$ and $|s_1|<1$ are also used; see \eqref{stokes-real-im} and \eqref{real-alpha-k}. A straightforward consequence of the above formula is that $\Psi^{(2)}_{\a,-}(\l)$ is identically zero. Furthermore, it follows from \eqref{Psi2-J-neg} and \eqref{Psi2-J-pos} that $\Psi^{(2)}_{\a, +}(\l)$ is also identically zero. Therefore, $\Psi^{(2)}_\a(\l)$ vanishes for all $\l \in \mathbb{C}$.

Then, the relation \eqref{vanishing-psi2-psi1} between $\Psi^{(1)}_\a(\l)$ and $\Psi^{(2)}_\a(\l)$ implies that $\Psi^{(1)}_\a(\l) \equiv 0 $ for all $\l \in \mathbb{C}$. This completes the proof of the vanishing lemma.
\end{proof}

\section{Nonlinear steepest descent analysis}\label{sec:thm2-bc:thm3}

The nonlinear steepest descent for RH problems is a powerful asymptotic method, which was introduced by Deift and Zhou \cite{Deift1993} to study the asymptotics of the MKdV equation. Later, this method has been successfully developed to solve asymptotic problems related to orthogonal polynomials, random matrices, Painlev\'e equations, as well as other integrable PDEs; for example, see \cite{Buck:Miller2015,Cla:Tamara2010,Claeys2008,Dei:Kri:McL:Ven:Zhou1999-2,Deift1995,Its2003,Kri:McL1999,Wong:Zhao2016,Xu:Dai:Zhao2014}.

In this section, we will apply this method to derive the asymptotics of PII as $x \to \pm \infty$. From the formula \eqref{stokes-real-im}, the associated Stokes multiplier $s_2$ equals 0 for both the real and purely imaginary Ablowitz-Segur solutions. Therefore, we may conduct the steepest descent analysis for both cases at the same time.

Let us consider the following RH problem for $\Psi_\a(\l)$, which is equivalent to the RH problems in \cite[Chap. 11]{Fokas2006} and \cite{Its2003} with $s_2 = 0$.

\begin{itemize}
\item[(a)] $\Psi_\a(\l)$ is analytic for $\l \in \mathbb{C} \cut \Sigma$, where $\Sigma =\D\cup_{k= 1,3,4,6} \gamma_k$, with $\gamma_k=\{\l \in \mathbb{C}: \textrm{arg}\, \l = \frac{\pi}{6}+\frac{\pi}{3}(k-1)\}$ oriented to infinity; see Figure \ref{contour(s2=0)};

\item[(b)] $\Psi_{\a,+}(\l)=\Psi_{\a,-}(\l) S_k$ for $\l \in \gamma_k$, where $S_k$'s are the upper or lower triangular matrices given in \eqref{Psi-jumps};

\begin{figure}[h]
  \centering
  \includegraphics[width=8cm]{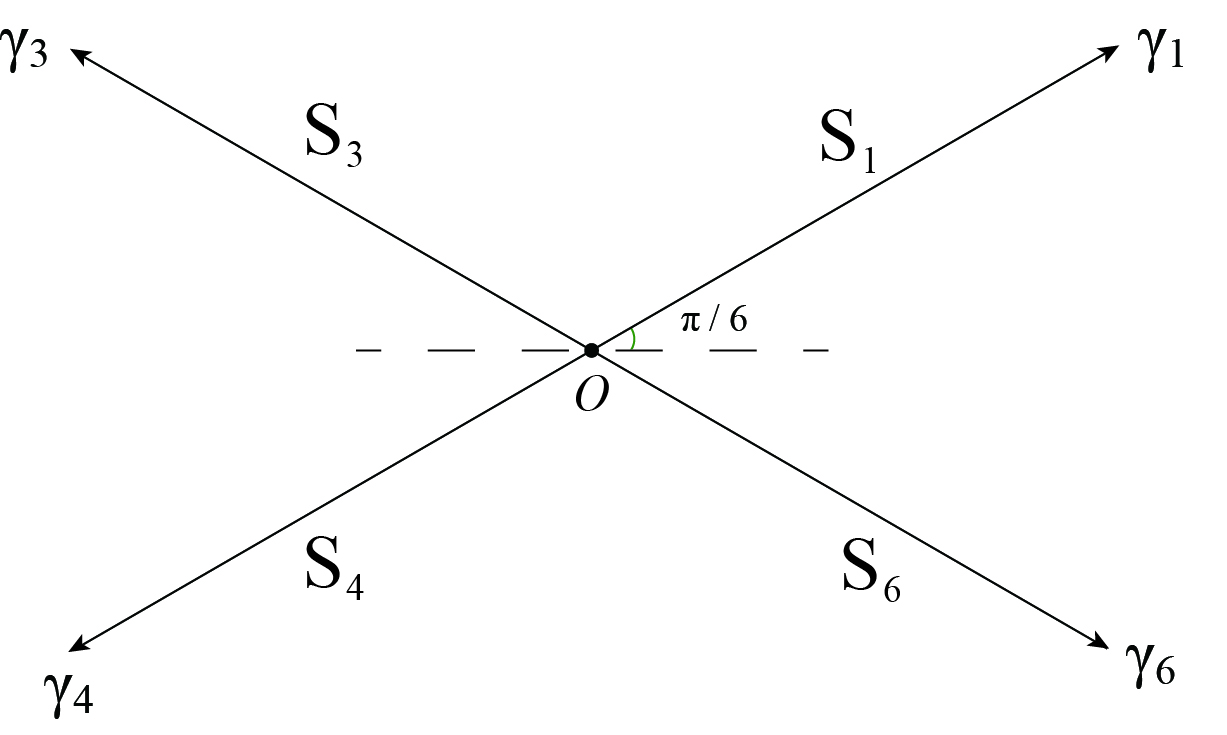}
  \caption{The contour $\Sigma$ for the RH problem of $\Psi_\a(\l)$  with $s_2=0$, together with the associated jump matrices on corresponding rays.}\label{contour(s2=0)}
\end{figure}

 \item[(c)] As $\l \to \infty$, $\Psi_{\a}(\l)$ has the following asymptotic behavior:
 \begin{equation} \label{psia-infinity}
  \Psi_\a(\l) = \biggl[ I+ \frac{\Psi_{-1}(x)}{\l}+O(\l^{-2}) \biggr]e^{-\theta (\l)\sigma_3}  \quad \textrm{with } \theta(\l) = i(\frac{4}{3}\l^3 + x\l);
 \end{equation}

 \item[(d)] At $\l =0$, $\Psi_\a(\l)$ has the following behaviors:
 \begin{equation}
  \Psi_\a(\l) = O\left(\begin{matrix}
  |\l|^{-\a} &  |\l|^{-\a}\\
   |\l|^{-\a} & |\l|^{-\a}
  \end{matrix}\right), \qquad \textrm{if } 0<  \re \alpha < \frac{1}{2}
\end{equation}
and
\begin{equation}
  \Psi_\a(\l) =
  O\left(\begin{matrix}
  |\l|^{\a} &  |\l|^{\a}\\
   |\l|^{\a} & |\l|^{\a}
  \end{matrix}\right), \qquad \textrm{if } -\frac{1}{2} < \re \alpha \leq 0.
\end{equation}
\end{itemize}

As $x\to +\infty$, the asymptotics for the solutions $u_{\textrm{AS}}(x;\a)$ and $u_{i\textrm{AS}}(x;\alpha)$ can be derived from the following asymptotic results in Its and Kapaev \cite[Thm. 2.2]{Its2003}:
\begin{equation} \label{its-kapa-for}
  u(x;\a) = u_1(x; \a)  - \frac{i s_3}{2 \sqrt{\pi} x^{1/4}} e^{-\frac{2}{3} x^{3/2}} [1+O(x^{-3/4})] + O(s_3^2 x^{-7/4}e^{-\frac{4}{3} x^{3/2}}), \quad x \to +\infty,
\end{equation}
where $u_1(x; \a) \sim \a / x$. Recalling the value of the Stokes multiplier $s_3$ in \eqref{stokes-real-im} 
we can obtain  \eqref{asy-pos} and \eqref{asy-pos-im} from the above formula. Here, one should be careful to note that the solution $u_1(x; \a)$ is the solution of PII corresponding to the Stokes multipliers
\begin{equation}
  s_1 = -2 \sin (\pi \a) \quad \textrm{and} \quad s_2=s_3 = 0.
\end{equation}
This solution is neither real nor purely imaginary for real $x$ according to Theorems \ref{real-reduction-thm} and \ref{im-reduction-thm} in the previous section. Since $u_{\textrm{AS}}(x;\a)$ is a real solution when $s_3 = -\sin(\pi \a) + ik$ with $\a \in (-\frac{1}{2}, \frac{1}{2})$ and $ k \in (-\cos(\pi \a),  \cos(\pi \a))$, one needs to take the real part of \eqref{its-kapa-for} to get the asymptotics of $u_{\textrm{AS}}(x;\a)$ in \eqref{asy-pos}. Similarly, one needs to take the imaginary part of \eqref{its-kapa-for} to obtain \eqref{asy-pos-im} for $u_{i\textrm{AS}}(x;\a)$ whose Stokes multiplier $s_3 = -\sin(\pi \a) + ik$ with $\a \in i\mathbb{R}$ and $ k \in i\mathbb{R}$.

Next, in the remaining sections of this paper, we will focus on the nonlinear steepest analysis for the above RH problem for $\Psi_\a(\l)$ as $x\to - \infty$.

\subsection{Normalization}

Since we will let $x \to -\infty,$ it is natural to consider the following scaling
\begin{equation}
\l=(-x)^{1/2}z,
\end{equation}
such that the phase function $\theta(\l) $ in \eqref{psia-infinity} becomes
\begin{equation} \label{tilde-theta-def}
\theta(\l) = t\tilde{\theta}(z), \quad \textrm{with } \tilde{\theta}(z) := i(\frac{4}{3}z^3 -z) \quad \textrm{and} \quad t=(-x)^{3/2}.
\end{equation}
Then, the first transformation is defined as
\begin{equation} \label{Psi-U}
U(z) = \Psi_\a(\l(z))\exp(t \tilde{\theta}(z)\sigma_3),
\end{equation}
where the behavior of $U(z)$ at infinity is normalized, i.e., $U(z) = I + O(1/z)$ as $z \to \infty$.

Before we state the RH problem for $U(z)$, we note that the above transformation will change the jumps $S_k$ in \eqref{Psi-jumps} to be $e^{-t\tilde{\theta}(z) \sigma_3}  S_k e^{t\tilde{\theta}(z) \sigma_3} $. This keeps the diagonal entries unchanged, but multiplies the upper and lower triangular entry by $e^{\mp 2t \tilde\theta (z)}$, respectively. Then, it is important for us to know the properties of $\re \tilde\theta (z) $ in the complex-$z$ plane. From \eqref{tilde-theta-def}, it is easily seen that $z_{\pm}=\pm \frac{1}{2}$ are the stationary points for $\tilde\theta (z) $. Moreover, we get the signature properties of $\re \tilde{\theta}(z)$ as shown in Figure \ref{sign-theta}.

\begin{figure}[h]
  \centering
  \includegraphics[width=12cm]{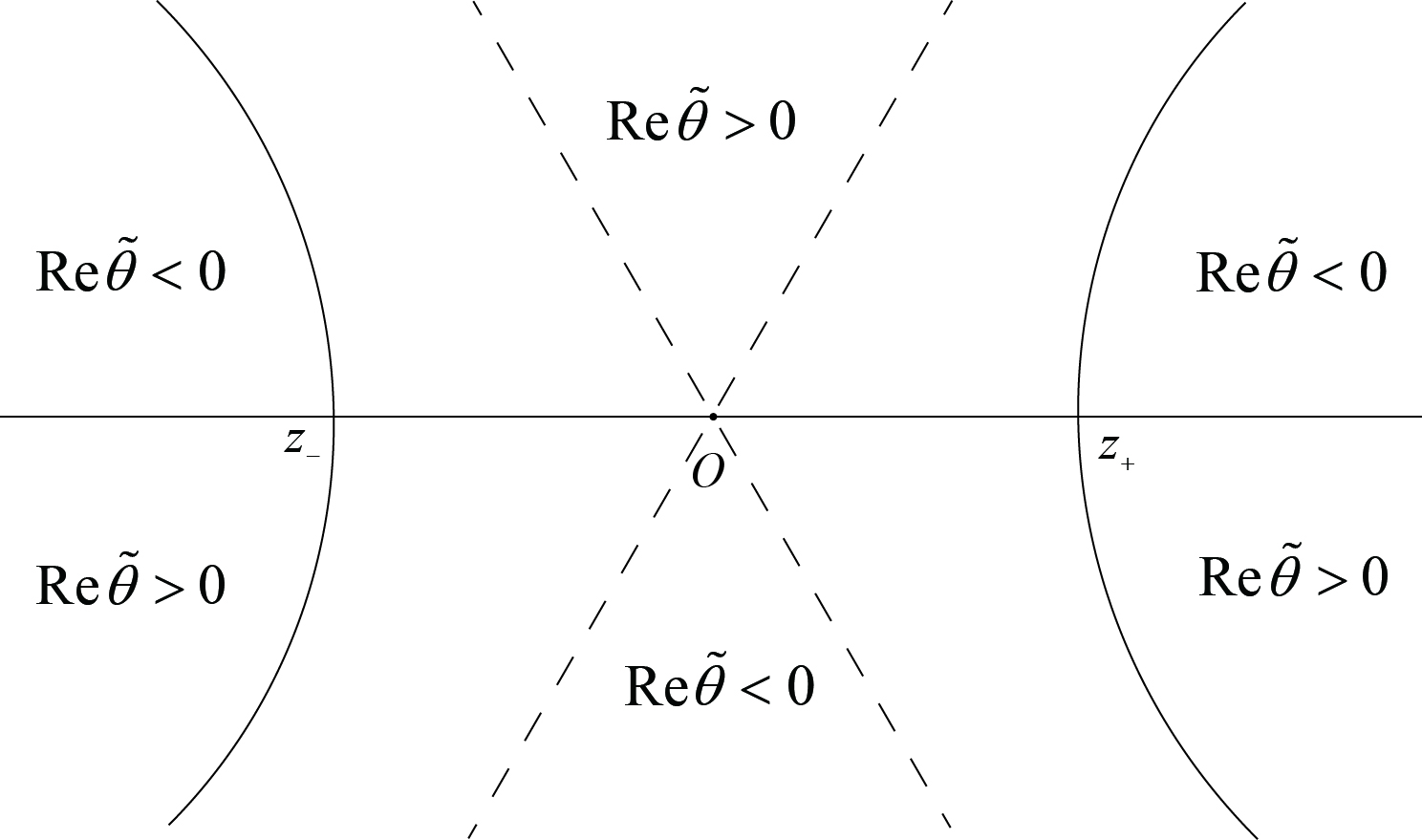}
  \caption{The signature properties of $\re \tilde{\theta}(z)$, where the dashed lines are the rays  $ \{z \in \mathbb{C}: \textrm{arg}\, z = \frac{k\pi}{3}, k = 1,2,4,5\}$.   }\label{sign-theta}
\end{figure}

Note that, in each sector bounded by the rays $\gamma_k$ in Figure \ref{contour(s2=0)}, $\Psi_\a(\l)$ is indeed an analytic function with only a branch point at $\l = 0$. As $z$ is only a rescaling of $\l$, it is possible for us to deform the original contour $\Sigma$ such that the new one
is in accordance with the signature table of $\re \tilde{\theta}(z)$ in Figure \ref{sign-theta}. Finally, based on \eqref{Psi-U} and the RH problem for $\Psi_\a(\l)$,  $U(z)$ satisfies a RH problem as follows:
\begin{itemize}
\item[(a)] $U(z)$ is analytic for $ z \in \mathbb{C} \cut \Sigma_U$;

\begin{figure}[h]
  \centering
  \includegraphics[width=14cm]{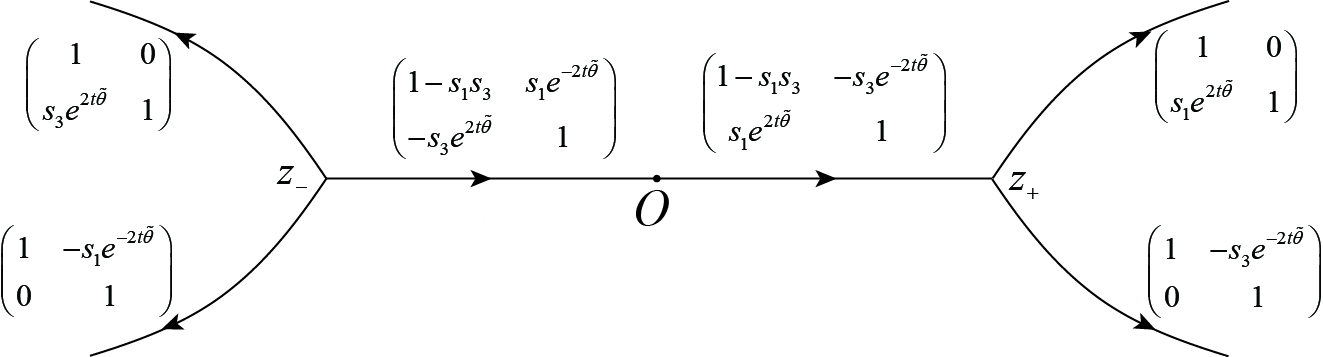}
  \caption{The contour $\Sigma_U$ and associated jump matrices for the RH problem of $U(z)$. Note that the contour emanating from $z_{\pm}$ and going to infinity is chosen to agree with the property of $\re\tilde{\theta}(z)$. Then, the off-diagonal entries in the corresponding jump matrices are exponentially small when $z$ is bounded away from $z_{\pm}$ as $t\to +\infty$.} \label{contour(sd)}
\end{figure}

\item[(b)] $U_{+}(z)=U_{-}(z) J_{U}(z)$ for $z \in \Sigma_U$, where the contour $\Sigma_U$ and the jump $J_U(z)$ are depicted in Figure \ref{contour(sd)};

 \item[(c)] As $z \to \infty$, $U(z) = I + O(1/z) $;

 \item[(d)] At $z =0$, $U(z)$ has the following behaviors:
 \begin{equation}\label{U-origin-asy}
  U(z) = O\left(\begin{matrix}
  |z|^{-\a} &  |z|^{-\a}\\
   |z|^{-\a} & |z|^{-\a}
  \end{matrix}\right), \qquad \textrm{if } 0<  \re \alpha < \frac{1}{2}
\end{equation}
and
\begin{equation}\label{U-origin-asy-alpha-neg}
 U(z) =
  O\left(\begin{matrix}
  |z|^{\a} &  |z|^{\a}\\
   |z|^{\a} & |z|^{\a}
  \end{matrix}\right), \qquad \textrm{if } -\frac{1}{2} < \re \alpha \leq 0;
\end{equation}

 \item[(e)] As $z \to  z_\pm$, $U(z)$ is bounded.

\end{itemize}

\subsection{Contour deformation}

From Figures \ref{sign-theta} and \ref{contour(sd)}, one can see that the jumps $J_U(z)$ are exponentially close to the identity matrix when $t \to +\infty$, except the one on the line segment $[z_-, z_+]$. Indeed, since $\re \tilde{\theta}(z) = 0$ for $z \in [z_-, z_+]$, the jump matrices
\begin{equation}
\left(
  \begin{array}{cc}
  1-s_1 s_3 & -s_3 e^{-2t \tilde \theta}\\
    s_1 e^{2t \tilde \theta} & 1
  \end{array}
    \right) \quad \textrm{for } z \in (0,z_+), \qquad
  \left(
  \begin{array}{cc}
  1-s_1 s_3 & s_1 e^{-2t \tilde \theta}\\
    -s_3 e^{2t \tilde \theta} & 1
  \end{array}
    \right) \quad \textrm{for } z \in (z_-, 0)
\end{equation}
are highly oscillating as $t \to +\infty$. To remove the oscillatory off-diagonal entries, we notice that the above matrices satisfies the following
LDU-decomposition:
\begin{eqnarray}
\left(
  \begin{array}{cc}
  1-s_1 s_3 & -s_3 e^{-2t \tilde \theta}\\
    s_1 e^{2t \tilde \theta} & 1
  \end{array}
    \right) &=&  \left(
  \begin{array}{cc}
  1 & 0\\
    \frac{s_1 e^{2t\tilde{\theta}(z)}}{1-s_1 s_3} & 1
  \end{array}
    \right)  \left(
  \begin{array}{cc}
  1-s_1 s_3 & 0\\
    0 & \frac{1}{1-s_1 s_3}
  \end{array}
    \right) \left(
  \begin{array}{cc}
  1 & -\frac{s_3 e^{-2t\tilde{\theta}(z)}}{1-s_1 s_3}\\
    0 & 1
  \end{array}
    \right) \nonumber\\
  &:=&S_{L_1}S_{D}S_{U_1},
\end{eqnarray}
and
\begin{eqnarray}
\left(
  \begin{array}{cc}
  1-s_1 s_3 & s_1 e^{-2t \tilde \theta}\\
    -s_3 e^{2t \tilde \theta} & 1
  \end{array}
    \right)&=& \left(
  \begin{array}{cc}
  1 & 0\\
    -\frac{s_3 e^{2t\tilde{\theta}(z)}}{1-s_1 s_3} & 1
  \end{array}
    \right)  \left(
  \begin{array}{cc}
  1-s_1 s_3 & 0\\
    0 & \frac{1}{1-s_1 s_3}
  \end{array}
    \right) \left(
  \begin{array}{cc}
  1 & \frac{s_1 e^{-2t\tilde{\theta}(z)}}{1-s_1 s_3}\\
    0 & 1
  \end{array}
    \right)\nonumber \\
    &:=&S_{L_2}S_{D}S_{U_2}.
\end{eqnarray}
\begin{figure}[h]
  \centering
  \includegraphics[width=14cm]{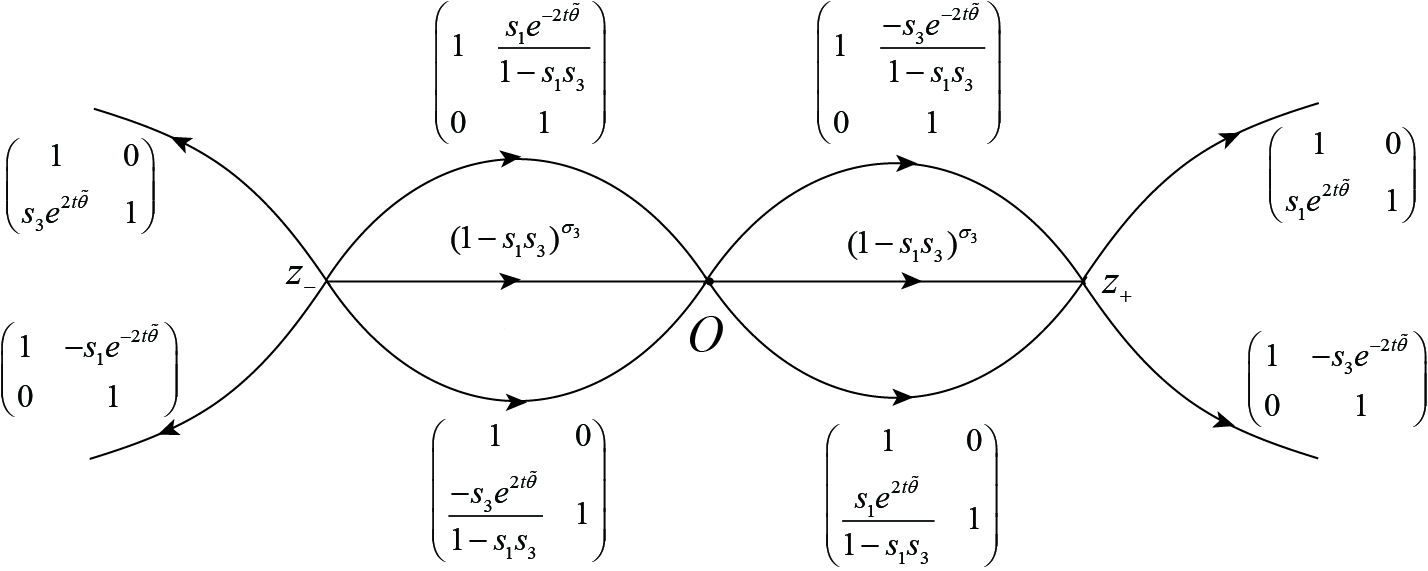}
  \caption{The contour $\Sigma_T$ and associated jump matrices for the RH problem of $T(z)$. Similar to Figure \ref{contour(sd)}, due to the property of $\re\tilde{\theta}(z)$, all the off-diagonal entries in the jump matrices are exponentially small when $z$ is bounded away from $z_{\pm}$ and the origin as $t\to +\infty$.} \label{contour(LD)}
\end{figure}
From the above factorization, we define the second transformation from $U \to T$ as follows:
\begin{equation}\label{def-T}
T(z) := \begin{cases}\D
U(z), & \textrm{for $z$ outside the two lens shaped regions},\\
U(z)S^{-1}_{U_1}, & \textrm{for $z$ in the upper part of the right lens shaped region},\\
U(z)S_{L_1}, & \textrm{for $z$ in the lower part of the right lens shaped region},\\
U(z)S^{-1}_{U_2}, & \textrm{for $z$ in the upper part of the left lens shaped region},\\
U(z)S_{L_2}, & \textrm{for $z$ in the lower part of the left lens shaped region};
\end{cases}
\end{equation}
see Figure \ref{contour(LD)}.
Then, $T(z)$ solves the following RH problem:

\begin{itemize}
\item[(a)] $T(z)$ is analytic for $ z \in \mathbb{C} \cut \Sigma_T$;

\item[(b)] $T_{+}(z)=T_{-}(z) J_{T}(z)$ for $z \in \Sigma_T$, where the contour $\Sigma_T$ and the jump $J_T(z)$ are depicted in Figure \ref{contour(LD)};

 \item[(c)] As $z \to \infty$, $T(z) = I + O(1/z) $;

 \item[(d)] At $z =0$, $T(z)$ has the same behaviors as $U(z)$ in \eqref{U-origin-asy} and \eqref{U-origin-asy-alpha-neg};

 \item[(e)] As $z \to z_\pm$, $T(z)$ is bounded.
\end{itemize}

\subsection{Global parametrix}

From Figures \ref{sign-theta} and \ref{contour(LD)}, we see that the jump matrix for $T$ is of the form $I$ plus an exponentially
small term for $z $ bounded away from $[z_-, z_+]$. Neglecting the exponential small terms, we arrive at
an approximating RH problem for $N(z)$ as follows:

\begin{itemize}
  \item[(a)] $N(z)$ is analytic for $z \in \mathbb{C} \setminus [z_-, z_+]$, where $z_\pm = \pm \frac{1}{2}$;

  \item[(b)] On the line segment $[z_-, z_+]$, we have
\begin{align}
N_{+}(z)=N_{-}(z) S_{D}, \qquad \textrm{with } S_D = (1-s_1 s_3)^{\sigma_3};
\end{align}

\item[(c)]  $N(z)= I+O(1/z) $, \qquad as $z\to \infty$.

\end{itemize}

A solution to the above RH problem can be constructed explicitly,
\begin{equation} \label{parametrix-global}
  N(z)=\left(\frac{z+1/2}{z-1/2}\right)^{\nu \sigma_3}, \qquad \textrm{with } \nu= -\frac{1}{2\pi i} \ln(1-s_1 s_3).
\end{equation}
Note that, with our choices of the Stokes multiplies $s_1$ and $s_3$ in \eqref{stokes-real-im} with either conditions \eqref{real-alpha-k} or \eqref{im-alpha-k}, the above constant $\nu$ is always purely imaginary. Moreover, the branch cut of $\left(\frac{z+1/2}{z-1/2}\right)^{\nu}$ is chosen along the segment $[-\frac{1}{2}, \frac{1}{2}]$, with $\arg (z\pm \frac{1}{2}) \in (-\pi, \pi)$, such that $
\left(\frac{z+1/2}{z-1/2}\right)^{\nu} \to 1$ as $ z\to \infty$.

The jump matrices of $T(z) [N(z) ]^{-1}$ are not uniformly close to the unit matrix near the end-points $z_\pm$ and
0, thus local parametrices have to be constructed in neighborhoods of these points.

\subsection{Local parametrices near $z = \pm \frac{1}{2}$ } \label{sec:local-z+}

Let us consider the right endpoint $z_+ = \frac{1}{2}$ first. Let $U(z_+, \delta)$ be a small disk with center at $z_+$ and radius $\delta > 0$. We seek a $2 \times 2$ matrix-valued
function $P^{(r)}(z)$ in $U(z_+, \delta)$, such that it satisfies the same jumps as $T$, possesses the same behavior as $T$ near $z_+$, and matches with $N(z)$ on the boundary $\partial U(z_+, \delta)$ of the disk. Therefore, $P^{(r)}(z)$ satisfies a RH problem as follows:
\begin{itemize}
  \item[(a)] $P^{(r)}(z)$ is analytic in $U(z_+, \delta) \setminus \Sigma_T$;

  \item[(b)] In $U(z_+, \delta)$, $P^{(r)}(z)$ satisfies the same jump conditions as $T(z)$ does; see Figure \ref{contour(psi-r)}, where we enlarge the contour for better illustration;

  \begin{figure}[h]
  \centering
  \includegraphics[width=7cm]{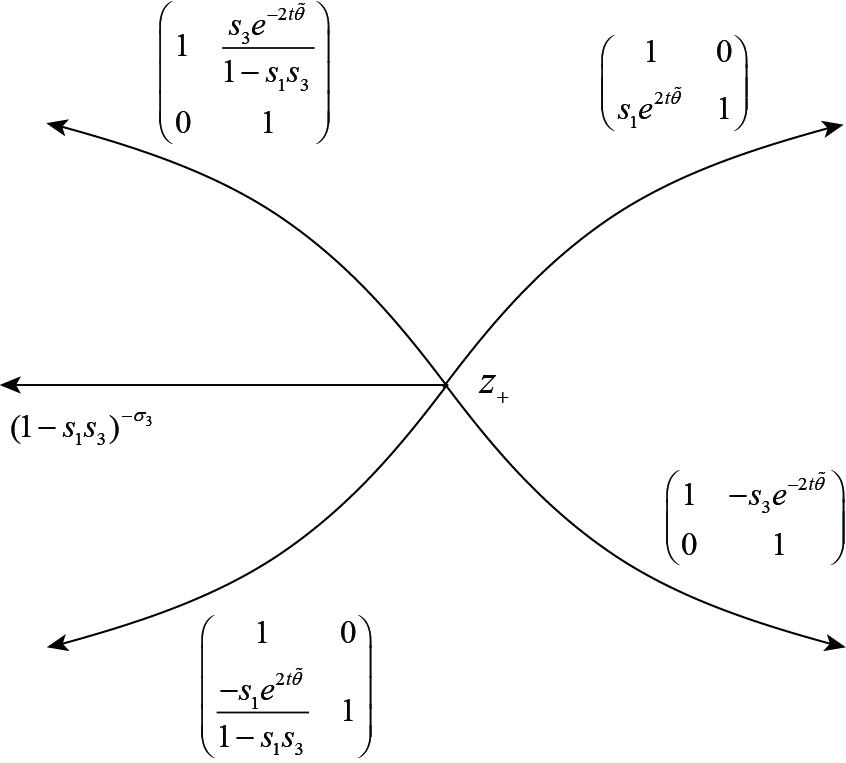}
  \caption{The contour  $\Sigma_T $ in $U(z_{+}, \delta)$ and associated jump matrices for the RH problem of $P^{(r)}(\zeta)$.}\label{contour(psi-r)}
\end{figure}
  \item[(c)] $P^{(r)}(z)$ fulfils the following matching condition on $\partial U(z_+, \delta) = \{ z : \ |z - z_+| = \delta \}$:
  \begin{equation} \label{psir-psid-match}
    P^{(r)}(z) [N(z) ]^{-1} = I + O (t^{-1/2}) \qquad \textrm{as } t \to + \infty;
  \end{equation}
  \item[(d)] As $z \to z_+$, $P^{(r)}(z)$ is bounded.
\end{itemize}

The solution $P^{(r)}(z)$ for the above RH problem is given in terms of the parabolic cylinder functions $D_{\nu}(\zeta)$; see Fokas et al. \cite[Sec. 9.4]{Fokas2006}. 
To give the exact formulas for $P^{(r)}(z)$, let us define the function $\zeta(z)$ as follows:
\begin{align} \label{zeta-defn}
\zeta(z):=2\sqrt{-\tilde{\theta}(z)+\tilde{\theta}(z_+)} =  2  \, \sqrt{-\frac{4i}{3} z^3+ iz -\frac{i}{3}},
\end{align}
where the branch of the square root is taken such that $\arg{(z-\frac{1}{2})} \in (-\pi, \pi)$. This function gives us a conformal mapping in the neighbourhood of $z=z_+ = 1/2$. Moreover, we have
\begin{equation} \label{zeta-z+-nei}
\zeta(z) \sim e^{3i\pi/4} 2\sqrt{2} \, (z-\frac{1}{2}), \qquad \textrm{as } z \to \frac{1}{2}.
\end{equation}
Then, the solution $P^{(r)}(z)$ is given as (see  Fokas et al. \cite[p. 322-325]{Fokas2006})
\begin{equation}
  P^{(r)}(z)= \beta(z)^{\sigma_3} \left(\frac{-h_1}{s_3}\right)^{-\sigma_3 /2} e^{it\sigma_3 /3} 2^{-\sigma_3 /2}\left(\begin{matrix}
\sqrt{t}\,\zeta(z) & 1\\
1 & 0
\end{matrix}\right) Z(\sqrt{t}\,\zeta(z)) e^{t\tilde{\theta}(z)\sigma_3}  \left(\frac{-h_1}{s_3}\right)^{\sigma_3 /2},
\end{equation}
where $\beta(z)$ is holomorphic in the neighborhood of $z=z_+=\frac{1}{2}$ with
\begin{align} \label{beta-defn}
\beta(z):=\left(\sqrt{t}\,\zeta(z)\frac{z+1/2}{z-1/2}\right)^{\nu }, \quad  \quad  \beta(z_+) = (8t)^{\nu /2}e^{3i\pi \nu /4}.
\end{align}
Here, the function $Z(\zeta)$ is defined in terms of the parabolic cylinder functions
\begin{equation} \label{zrh-def}
Z(\zeta):= \begin{cases}\D
Z_0(\zeta), &\arg \zeta \in (-\frac{\pi}{4}, 0),\\
Z_k(\zeta), &\arg \zeta \in (\frac{k-1}{2} \pi, \frac{k}{2}\pi), \ k = 1,2,3,\\
Z_4(\zeta), &\arg \zeta \in (\frac{3\pi}{2}, \frac{7\pi}{4}),
\end{cases}
\end{equation}
with
\begin{equation} \label{z0-def}
Z_0(\zeta)= 2^{-\sigma_3/2} \left(
  \begin{matrix}
  D_{-\nu-1}(i\zeta) & D_{\nu}(\zeta)\\
   \frac{d}{d\zeta}D_{-\nu-1}(i\zeta) & \frac{d}{d\zeta}D_{\nu}(\zeta)
  \end{matrix}
    \right) \left(
  \begin{matrix}
  e^{\frac{i\pi}{2}(\nu + 1)} & 0\\
    0 & 1
  \end{matrix}
    \right),
\end{equation}
and
\begin{eqnarray}
Z_{k+1}(\zeta) = Z_k (\zeta)H_k, \qquad k = 0,1,2,3.
\end{eqnarray}
The constant matrices $H_j$ above are upper or lower triangular ones given as follows
\begin{equation} \label{def-h-0123}
  H_0= \left(
  \begin{matrix}
  1 & 0\\
    h_0 & 1
  \end{matrix}
    \right), \quad H_1= \left(
  \begin{matrix}
 1 & h_1\\
    0 & 1
  \end{matrix}
    \right), \quad H_{k+2}=e^{i\pi (\nu+\frac{1}{2})\sigma_3}H_k e^{-i\pi (\nu+\frac{1}{2})\sigma_3}, \quad k = 0,1,
\end{equation}
where the constants are
\begin{equation}\label{def-h-01}
  h_0 = -i\frac{\sqrt{2\pi}}{\Gamma(\nu+1)} \qquad \textrm{and} \qquad h_1 = \frac{\sqrt{2\pi}}{\Gamma(-\nu)}e^{i \pi \nu}.
\end{equation}
Here $\nu$ is the same as that in \eqref{parametrix-global}. Note that, from the formula (9.4.23) in \cite{Fokas2006}, we have
\begin{equation} \label{prz-large-t}
P^{(r)}(z)= \left(\begin{matrix}
 1 & -\frac{\nu s_3}{h_1}e^{\frac{2it}{3}} \beta^2(z)\frac{1}{\sqrt{t}\zeta}\\
-\frac{h_1}{s_3}e^{-\frac{2it}{3}} \beta^{-2}(z)\frac{1}{\sqrt{t}\zeta} & 1
\end{matrix} \right)\times (I+O(t^{-1}))N(z), \qquad t \to +\infty
\end{equation}
uniformly for $z \in \partial U(z_+, \delta) $.

Near the other endpoint $z=z_{-} = - 1/2$, the parametrix $P^{(l)}(z)$ can be constructed in a similar way. Indeed, from the symmetry of the RH problem for $T(z)$ in Figure \ref{contour(LD)}, we may simply define
\begin{equation} \label{prl-realtion}
P^{(l)}(z)=\sigma_2 P^{(r)}(-z) \sigma_2.
\end{equation}

\subsection{Local parametrix near the origin}

Similar to the previous section, let $U(0, \delta)$ be a small disk with center at $0$ and radius $\delta > 0$. We seek a $2 \times 2$ matrix-valued function $P^{(0)} (z)$  in $U(0, \delta)$, such that it satisfies the following RH problem:
\begin{itemize}
  \item[(a)] $P^{(0)}(z)$ is analytic in $U(0, \delta) \setminus \Sigma_T$;

  \item[(b)] In $U(0, \delta)$, $P^{(0)}(z)$ satisfies the same jump conditions as $T(z)$ does; see Figure \ref{contour(LD)};

  \item[(c)] $P^{(0)}(z)$ fulfils the following matching condition on $\partial U(0, \delta) = \{ z : \ |z| = \delta \}$:
  \begin{equation} \label{psi0-psid-match}
    P^{(0)}(z) [N(z) ]^{-1} = I + O (t^{-1}) \qquad \textrm{as } t \to + \infty;
  \end{equation}

  \item[(d)] At $z =0$, $P^{(0)}(z)$ has the same behaviors as $U(z)$ in \eqref{U-origin-asy} and \eqref{U-origin-asy-alpha-neg}.
\end{itemize}

Unlike the previous section, we cannot construct the solution to the above RH problem explicitly. Luckily, as $x \to -\infty$, we only need the leading term for the asymptotics of $u(x; \a)$ (cf. \eqref{asy-neg} and \eqref{asy-neg-im}), which is also enough for us to establish the connection formulas. Moreover, while matching the local and global parametrices, one can see that the error terms from the endpoints $z_\pm$ are dominant comparing with that from the origin; see \eqref{psir-psid-match} and \eqref{psi0-psid-match}. Therefore, it is enough for us to prove the existence of $P^{(0)}(z)$ which satisfies the above RH problem. Note that, similar treatment is also adopted by Kriecherbauer and McLaughlin in \cite{Kri:McL1999} to handle the asymptotic behaviors of polynomials orthogonal with respect to Freud weights. Due to the lack of analyticity in the neighbourhood of the origin, a model RH problem is constructed in the neighbourhood of the origin, where the explicit expression of the solution is not given. Recently, Wong and Zhao \cite{Wong:Zhao2016} obtain more information about the solution by showing that its entries satisfy certain integral equations.

To prove the existence of $P^{(0)}(z)$, let us first consider the following model RH problem for $L(\eta)$.

\begin{itemize}

\item[(a)] $L(\eta)$ is analytic in $\mathbb{C} \cut \Gamma_L$;
\begin{figure}[h]
  \centering
  \includegraphics[width=8cm]{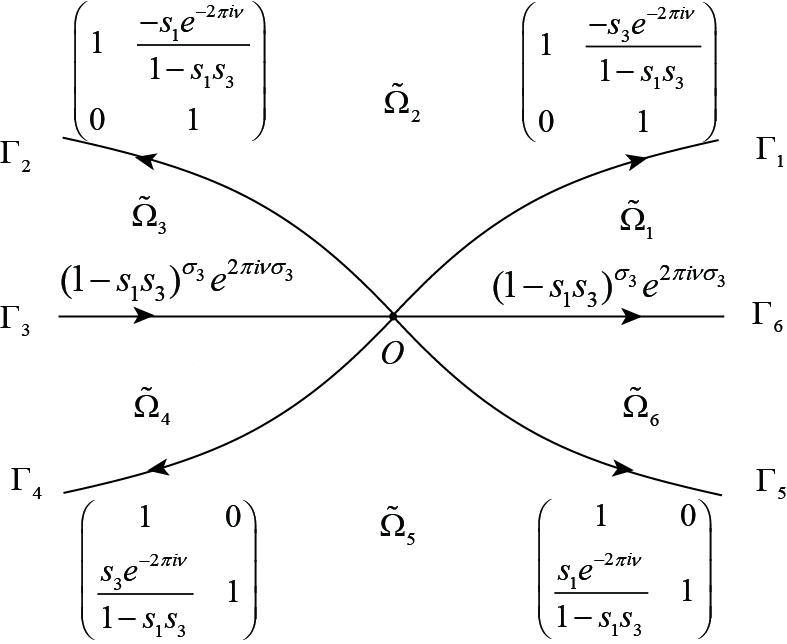}
  \caption{The contour $\Sigma_L$ and associated jump matrices for the RH problem of $L(\eta)$.}\label{contour(origin)}
\end{figure}

\item[(b)] $L(\eta)$ satisfies the following jump relations on $\Gamma_L$:
\begin{equation}\label{L+ L-}
 L_{+}(\eta) = L_{-}(\eta) J_L,
\end{equation}
where the contour $\Gamma_L$ and the jump $J_L$ are given in Figure \ref{contour(origin)};

\item[(c)] At $\eta = 0$, $L(\eta)$ has the following behaviors:
 \begin{equation}\label{L-origin-asy}
  L(\eta) = O\left(\begin{matrix}
  |\eta|^{-\a} &  |\eta|^{-\a}\\
   |\eta|^{-\a} & |\eta|^{-\a}
  \end{matrix}\right), \quad \arg \eta \in (-\pi, \pi), \quad \textrm{if } 0<  \re \alpha < \frac{1}{2}
\end{equation}
and
\begin{equation}\label{L-origin-asy-alpha-neg}
  L(\eta) =
  O\left(\begin{matrix}
  |\eta|^{\a} &  |\eta|^{\a}\\
   |\eta|^{\a} & |\eta|^{\a}
  \end{matrix}\right) , \quad \arg \eta \in (-\pi, \pi), \quad \textrm{if } -\frac{1}{2} < \re \alpha \leq 0;
\end{equation}

\item[(d)] $L(\eta)$ has the following behavior at infinity:
\begin{equation}
L(\eta) = \left(I+O\left(\frac{1}{\eta}\right)\right)e^{i\eta\sigma_3} , \qquad  \eta\to\infty.
\end{equation}

\end{itemize}

By standard arguments based on Liouville's theorem, it can be easily verified that if the solution of the above RH problem exists, then it is unique. Next, we proceed to justify the existence of the solution by proving the following vanishing lemma for the RH problem.

\begin{lemma}
For $\re \a \in (-\frac{1}{2}, \frac{1}{2})$, suppose that $L^{(1)}(\eta)$ satisfies the same jump conditions \eqref{L+ L-} and the same boundary conditions \eqref{L-origin-asy} and \eqref{L-origin-asy-alpha-neg} as $L(\eta)$, with just the behavior at infinity being altered to
\begin{equation}
L^{(1)}(\eta) = O\left(\frac{1}{\eta}\right)e^{i\eta\sigma_3}, \qquad \textrm{as} \quad \eta\to\infty.
\end{equation}
Then $L^{(1)}(\eta)$ is trivial, that is, $L^{(1)}(\eta) \equiv 0$.
\end{lemma}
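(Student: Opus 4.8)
The plan is to mimic almost verbatim the proof of Lemma~\ref{lemma-vanishing-1}, replacing the triple $(\Psi^{(1)}_\a,\Psi^{(2)}_\a,\Psi^{(3)}_\a)$ by an analogous triple $(L^{(1)},L^{(2)},L^{(3)})$ and the phase $\theta(\l)$ by the model phase $i\eta$. First I would introduce a sectorwise transformation $L^{(1)}\mapsto L^{(2)}$, in the same spirit as \eqref{vanishing-psi2-psi1}, that simultaneously strips off the exponential factor $e^{i\eta\sigma_3}$ at infinity and folds the several rays of $\Gamma_L$ onto the real axis. Concretely, in each sector one multiplies $L^{(1)}(\eta)$ on the right by the appropriate product of the constant jump matrices read off from Figure~\ref{contour(origin)} and then by $e^{-i\eta\sigma_3}$. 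After this transformation $L^{(2)}$ is analytic in $\C\setminus\mathbb{R}$, satisfies $L^{(2)}(\eta)=O(1/\eta)$ as $\eta\to\infty$, inherits the same behaviour \eqref{L-origin-asy}--\eqref{L-origin-asy-alpha-neg} at the origin, and its jump on $\mathbb{R}\setminus\{0\}$ is, on each half-line, of the form $e^{-i\eta\sigma_3}M e^{i\eta\sigma_3}$ with $M$ a constant matrix built from $s_1,s_3$, exactly as in \eqref{Psi2-J-neg}--\eqref{Psi2-J-pos}.

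Next I would form the auxiliary function
\begin{equation*}
  L^{(3)}(\eta):=L^{(2)}(\eta)\,\bigl[L^{(2)}(\bar\eta)\bigr]^{*},\qquad \eta\in\C\setminus\mathbb{R},
\end{equation*}
where, as in the proof of Lemma~\ref{lemma-vanishing-1}, $\bigl[L^{(2)}(\bar\eta)\bigr]^{*}$ denotes the Hermitian conjugate of $L^{(2)}$ evaluated at $\bar\eta$, so that $L^{(3)}$ is analytic in the open upper half-plane and its boundary value from above equals $L^{(2)}_{+}(\eta)\,[L^{(2)}_{-}(\eta)]^{*}$ on $\mathbb{R}$. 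The growth of $L^{(2)}$ gives $L^{(3)}(\eta)=O(\eta^{-2})$ at infinity, while \eqref{L-origin-asy}--\eqref{L-origin-asy-alpha-neg} give $L^{(3)}(\eta)=O(|\eta|^{\mp 2\a})$ at the origin; since $\re\a\in(-\tfrac12,\tfrac12)$ the exponent $\mp2\re\a$ lies in $(-1,1)$ and the singularity is integrable. Hence each entry of $L^{(3)}_+$ is integrable on $\mathbb{R}$, and Cauchy's theorem applied in the upper half-plane yields $\int_{\mathbb{R}}L^{(3)}_{+}(\eta)\,d\eta=0$, that is $\int_{\mathbb{R}}L^{(2)}_{+}(\eta)[L^{(2)}_{-}(\eta)]^{*}\,d\eta=0$, exactly as in \eqref{Cau-for-Psi2}.

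Adding the Hermitian conjugate of this identity and substituting the jump relation $L^{(2)}_{+}=L^{(2)}_{-}J_L$ reduces the integrand to $L^{(2)}_{-}(J_L+J_L^{*})[L^{(2)}_{-}]^{*}$ on each half-line. Because $i\eta$ is purely imaginary for real $\eta$, the factor $e^{i\eta\sigma_3}$ is unitary and $J_L+J_L^{*}=e^{-i\eta\sigma_3}(M+M^{*})e^{i\eta\sigma_3}$ is a congruence transform of the constant Hermitian matrix $M+M^{*}$, so the form is nonnegative precisely when $M+M^{*}$ is positive definite. The crux of the argument, and the step I expect to be the main obstacle, is to verify this positivity. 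This is exactly where the special structure of the Stokes data enters: with $s_1,s_3$ as in \eqref{stokes-real-im} under either \eqref{real-alpha-k} or \eqref{im-alpha-k} one has $s_3=\pm\bar s_1$, so that $1-s_1s_3$ is real and positive (indeed $1-|s_1|^2=\cos^2\pi\a-k^2>0$ in the real case and $1+|s_1|^2>0$ in the purely imaginary case). Just as $M+M^{*}$ collapsed to $\mathrm{diag}(2(1-s_1s_3),2)$ in Lemma~\ref{lemma-vanishing-1}, here it is again positive definite. Consequently the integral of a nonnegative Hermitian form vanishes, forcing $L^{(2)}_{-}\equiv0$ on $\mathbb{R}$; the jump relation then gives $L^{(2)}_{+}\equiv0$, and analytic continuation shows $L^{(2)}\equiv0$ throughout $\C$. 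Unwinding the invertible transformation $L^{(1)}\mapsto L^{(2)}$ finally yields $L^{(1)}\equiv0$, which proves the lemma.
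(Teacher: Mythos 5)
Your proposal is correct and is essentially the paper's own proof: the paper likewise folds $\Gamma_L$ onto the real axis by a sectorwise transformation $L^{(1)}\mapsto L^{(2)}$ and then reruns the Hermitian-conjugation/Cauchy argument of Lemma \ref{lemma-vanishing-1}, using that $(e^{\pm i\eta\sigma_3})^{*}=e^{\mp i\eta\sigma_3}$ for real $\eta$ and $(e^{\pm\pi i\nu\sigma_3})^{*}=e^{\pm\pi i\nu\sigma_3}$ because $\nu$ is purely imaginary. The one point you state loosely---that the constant part of the folded jump is $e^{i\pi\nu\sigma_3}Me^{i\pi\nu\sigma_3}$ rather than the bare Stokes matrix $M$---is immaterial, since this Hermitian congruence preserves the positive definiteness of $M+M^{*}$, exactly by the congruence-invariance you already invoke.
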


\begin{proof}
The proof is similar to that of Lemma \ref{lemma-vanishing-1}. Let us introduce the function $L^{(2)}(\eta)$ as follows:
\begin{equation}
L^{(2)}(\eta) :=  \begin{cases}\D
 L^{(1)}(\eta)e^{-i\eta\sigma_3}, &\quad \eta \in \tilde{\Omega}_2 \cup \tilde{\Omega}_5,\\
 L^{(1)}(\eta)J_1e^{-i\eta\sigma_3}, &\quad \eta \in \tilde{\Omega}_1,\\
 L^{(1)}(\eta)J_{2}^{-1}e^{-i\eta\sigma_3}, &\quad \eta \in \tilde{\Omega}_3, \\
 L^{(1)}(\eta)J_4e^{-i\eta\sigma_3}, &\quad \eta \in \tilde{\Omega}_4,\\
 L^{(1)}(\eta)J_{5}^{-1}e^{-i\eta\sigma_3}, &\quad \eta \in \tilde{\Omega}_6,
\end{cases}
\end{equation}
with $J_k$ the jump matrix on $\Gamma_k$; see Figure \ref{contour(origin)}.
It is easy to verify that $L^{(2)}(\eta)$ solves the following RH problem:
\begin{itemize}
\item[(a)] $L^{(2)}(\eta)$ is analytic in $\mathbb{C} \cut \mathbb{R}$;
\item[(b)] $L^{(2)}(\eta)$ satisfies the following jump relations on $\mathbb{R}\cut \{0\}$,
\begin{eqnarray}
L^{(2)}_{+}(\eta)=L^{(2)}_{-}(\eta) e^{i(\eta+\pi\nu)\sigma_3}\left(\begin{matrix}
1-s_1 s_3 & s_1\\
-s_3 & 1
\end{matrix}\right)e^{-i(\eta-\pi\nu)\sigma_3}, \quad \eta\in (-\infty, 0),\label{H-J-neg}\\
L^{(2)}_{+}(\eta)=L^{(2)}_{-}(\eta) e^{i(\eta+\pi\nu)\sigma_3}\left(\begin{matrix}
1-s_1 s_3 & -s_3\\
s_1 & 1
\end{matrix}\right)e^{-i(\eta-\pi\nu)\sigma_3}, \quad \eta\in (0, \infty)\label{H-J-pos};
\end{eqnarray}
\item[(c)] $L^{(2)}(\eta)$ has the following behavior near the origin.
 \begin{equation}
  L^{(2)}(\eta) = O\left(\begin{matrix}
  |\eta|^{-\a} &  |\eta|^{-\a}\\
   |\eta|^{-\a} & |\eta|^{-\a}
  \end{matrix}\right), \qquad \textrm{if } 0<  \re \alpha < \frac{1}{2}
\end{equation}
and
\begin{equation}
 L^{(2)}(\eta) =
  O\left(\begin{matrix}
  |\eta|^{\a} &  |\eta|^{\a}\\
   |\eta|^{\a} & |\eta|^{\a}
  \end{matrix}\right), \qquad \textrm{if } -\frac{1}{2} < \re \alpha \leq 0;
\end{equation}

\item[(d)] $L^{(2)}(\eta)$ has the following behavior at infinity:
\begin{equation}
L^{(2)}(\eta) = O\left(\frac{1}{\eta}\right), \quad \eta \to \infty.
\end{equation}
\end{itemize}

The above RH problem for $L^{(2)}$ is very similar to that for $\Psi^{(2)}_\a$ in \eqref{Psi2-J-neg}-\eqref{Psi2-origin}, where only the factor $e^{\theta(\l)\sigma_3}$ in \eqref{Psi2-J-neg}-\eqref{Psi2-J-pos} replaced by $e^{-i(\eta-\pi\nu)\sigma_3}$. Then, by noting the fact that $(e^{\pm i\eta \sigma_3})^{*}=e^{\mp i\eta \sigma_3}$ for $\eta \in\mathbb{R}$ and $(e^{\pm \pi i \nu \sigma_3})^{*}=e^{\pm \pi i \nu \sigma_3}$ since $\nu$ is purely imaginary, a similar analysis gives us $L^{(2)}(\eta) \equiv 0$, which yields $L^{(1)}(\eta) \equiv 0$.

This completes the proof of the vanishing lemma.
\end{proof}

The above vanishing lemma yields the solvability of the RH problem for $L(\eta)$. This is a standard approach to ensure the existence of the solution for a RH problem; for more details, one may refer to  \cite{Dei:Kri:McL:Ven:Zhou1999-2,Fokas1992} as well as \cite[Proposition 2.5]{Claeys2008}.

To construct the solution $P^{0}(z)$ in terms of $L(\eta)$, let us first introduce a conformal mapping in the neighbourhood of $z= 0$ as follows:
\begin{equation}\label{map-z-eta}
\eta(z):= i \tilde{\theta}(z) = z - \frac{4}{3} z^3,
\end{equation}
where $\tilde{\theta}(z)$ is defined in \eqref{tilde-theta-def}. Then, the parametrix $P^{(0)}(z)$ near $z=0$ can be defined as
\begin{equation} \label{p0z-defn}
P^{(0)}(z)=E(z)L(t \eta(z)) e^{-it \eta(z) \sigma_3} \begin{cases}\D
 e^{-i \pi \nu \sigma_3}, & \im z >0,\\
 e^{i \pi \nu \sigma_3}, & \im z <0,
 \end{cases}
\end{equation}
where $E(z)$ is an analytic function in the neighbourhood of 0 given below
\begin{equation}
E(z):=\left(\frac{z+1/2}{1/2 -z}\right)^{\nu \sigma_3}.
\end{equation}
Here the branch cut is chosen such that $\arg(z+1/2)\in(-\pi, \pi)$ and $\arg(1/2-z)\in (-\pi, \pi)$, which implies the following relation:
 \begin{equation}
 E(z) = \left(\frac{z+1/2}{z-1/2}\right)^{\nu \sigma_3}
 \begin{cases}\D
 e^{i \pi \nu \sigma_3}, & \im z >0,\\
 e^{-i \pi \nu \sigma_3}, & \im z <0.
 \end{cases}
 \end{equation}
Similar to Section \ref{sec:local-z+}, from the RH problem for $L(\eta)$, it is easy to verify that $P^{(0)}(z)$ defined in \eqref{p0z-defn} indeed satisfies all the conditions in the RH problem listed at the beginning of the current section. This completes the parametrix construction in the neighbourhood of $z=0$.

\subsection{Final transformation}

Now we proceed to the final transformation by defining
\begin{equation}\label{error}
R(z)=\begin{cases}\D T(z)(P^{(r)})^{-1}(z), & \quad z \in U(z_+,\delta)\setminus \Sigma_T;\\
T(z)(P^{(l)})^{-1}(z), & \quad z \in U(z_-,\delta)\setminus \Sigma_T; \\
T(z)(P^{(0)})^{-1}(z), & \quad z \in U(0,\delta)\setminus \Sigma_T; \\
T(z) N^{-1}(z), &\quad elsewhere.
\end{cases}
\end{equation}
Then, the error function $R(z)$ solves the following RH problem:
\begin{itemize}
\item[(a)] $R(z)$ is analytic for $ z \in \mathbb{C}\cut \Sigma_R$; see Figure \ref{contour(error)}.
\begin{figure}[h]
  \centering
  \includegraphics[width=12cm]{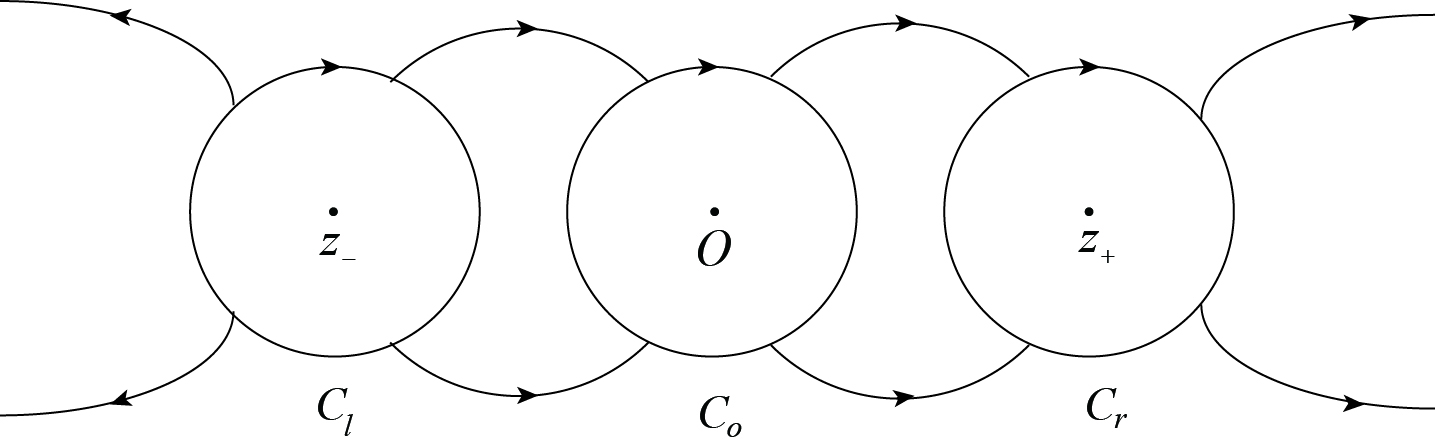}
  \caption{The contour $\Sigma_R$ for the RH problem of $R(z)$. Here $C_l$, $C_r$ and $C_0$ denote the circles centered $z_\pm$ and 0 with small radius $\delta$, oriented in the clockwise direction;}\label{contour(error)}
\end{figure}

\item[(b)] $R(z)$ satisfies the following jump conditions
\begin{equation}
R_{+}(z)=R_{-}(z) J_{R}(z), \quad z\in \Sigma_R,
\end{equation}
where
\begin{equation}\label{Jr-def}
J_{R}(z)=\begin{cases}\D P^{(r)}(z) N^{-1}(z),& \quad z\in \partial {U}(z_{+}, \delta),\\
P^{(l)}(z) N ^{-1}(z),& \quad z\in \partial {U}(z_{-}, \delta),\\
P^{(0)}(z) N^{-1}(z),& \quad z\in \partial {U}(0, \delta),\\
N(z)J_{T}(z) N^{-1}(z),& \quad z\in \Sigma_R \cut \{ \partial {U}(z_{\pm}, \delta) \cup \partial {U}(0, \delta) \};
\end{cases}
\end{equation}
\item[(c)]At infinity, the asymptotic behavior of $R(z)$ is as follows:
\begin{equation}
R(z)= I+O(1/z), \quad z\to \infty.
\end{equation}
\end{itemize}

Due to the matching conditions \eqref{psir-psid-match} and \eqref{psi0-psid-match} and the definition of the jump matrices $J_T(z)$ in Figure \ref{contour(LD)}, it follows that, as $ t \to \infty$,
\begin{equation}\label{Jr-t-order}
J_{R}(z)=\begin{cases}\D
I+O(t^{-\frac{1}{2}}), & \quad z \in \partial U(z_{\pm}, \delta),\\
I+O(t^{-1}), & \quad z \in \partial U(0, \delta),\\
I+O(e^{-ct}), & \quad z\in \Sigma_R \cut \partial {U}(z_{\pm}, \delta) \cup \partial {U}(0, \delta),
\end{cases}
\end{equation}
where $c$ is a positive constant. Moreover, the error terms in the above formula hold uniformly for $z$ in the corresponding contours.
Therefore, we have
\begin{equation}\label{est-Jr}
\|J_{R}(z)-I\|_{L^{2}\cap L^{\infty}(\Sigma_R)} = O(t^{-\frac{1}{2}}).
\end{equation}
It is well-known that the RH problem for $R(z)$ is equivalent to an integral equation as follows:
\begin{equation}\label{R-Cau-int}
R(z) = I +\frac{1}{2\pi i} \int_{\Sigma_R} \frac{R_{-}(z')(J_{R}(z')-I)}{z' - z} dz'.
\end{equation}
Based on the standard procedure of norm estimation of the above Cauchy operator, it follows that for sufficiently large $t$ the relevant integral operator is contracting, and the integral equation can be solved in $L^2(\Sigma_R)$ by iterations; see the standard arguments in \cite{Deift1999,Dei:Kri:McL:Ven:Zhou1999-2}. Then, we have, as $t \to \infty$,
\begin{equation}\label{est-R}
R(z) =I + O(t^{-\frac{1}{2}}), \qquad \textrm{uniformly for } z \in \mathbb{C} \setminus \Sigma_R.
\end{equation}
This completes the nonlinear steepest descent analysis.

\section{Proof of the main results} \label{sec-main-proof}

We first show the following asymptotic result of $u(x;\a)$ for general Stokes multipliers $s_1$ and $s_3$ (where $s_2$ is assumed to be 0 at the beginning of Section \ref{sec:thm2-bc:thm3}).  Then, by choosing the specific Stokes multipliers in \eqref{stokes-real-im} together with the conditions \eqref{real-alpha-k} and \eqref{im-alpha-k} for the real and purely imaginary Ablowitz-Segur solutions, we will prove our Theorems \ref{main-thm} and \ref{main-thm-im}.

\begin{lemma} \label{lemma-u-asy}
  With the Stokes multipliers in \eqref{stokes-real-im}, we have the following asymptotic result for $u(x;\a) $ associated with the RH problem for $\Psi_\a$ satisfying the conditions \eqref{Psi+Psi-}-\eqref{Psi-origin-asy-alpha-neg}:
  \begin{eqnarray}\label{u-conjugate}
u(x;\a) &=& (-x)^{-1/4}\left\{ \frac{\sqrt{\pi} e^{-i\frac{\pi \nu}{2}}}{s_1 \Gamma(\nu)} e^{i\frac{2}{3}(-x)^{3/2}+\nu \ln {8(-x)^{3/2}} -i\frac{\pi}{4}}\right.\nonumber\\
 && \qquad \qquad \qquad \left. + \frac{\sqrt{\pi} e^{-i\frac{\pi \nu}{2}}}{s_3 \Gamma(-\nu)} e^{-i\frac{2}{3}(-x)^{3/2}-\nu \ln {8(-x)^{3/2}}+i\frac{\pi}{4}}\right\} +O(|x|^{-1}),
\end{eqnarray}
 $x \to -\infty$, where 
\begin{eqnarray}
\nu &=& -\frac{1}{2\pi i}\ln (1-s_1 s_3) \nonumber\\
&=&\begin{cases}\D
-\frac{1}{2\pi i}\ln (\cos^2 {\pi \a}-k^2) \quad  \textrm{for} \quad \a\in(-\frac{1}{2}, \frac{1}{2}),\quad k\in (-\cos {\pi \a}, \cos {\pi \a}), \vspace{10pt}\\
\D-\frac{1}{2\pi i}\ln (\cosh^2 {\pi i \a}+|k|^2) \quad  \textrm{for} \quad \a\in i\mathbb{R},\quad k\in i\mathbb{R}.
\end{cases}
\end{eqnarray}
\end{lemma}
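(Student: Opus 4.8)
The plan is to extract the asymptotics of $u(x;\a)$ from the solution $R(z)$ of the final RH problem by unwinding the chain of transformations $\Psi_\a \to U \to T \to R$ and reading off the coefficient of $1/z$ at infinity. Since $u(x;\a) = 2(\Psi_{-1})_{12}$, where $\Psi_{-1}$ is the coefficient in \eqref{psia-infinity}, the first step is to relate $(\Psi_{-1})_{12}$ to the coefficient $R_1$ in $R(z) = I + R_1/z + O(z^{-2})$. From \eqref{Psi-U} together with $\l = (-x)^{1/2}z$ and $t=(-x)^{3/2}$ one finds $\Psi_{-1} = (-x)^{1/2}U_1$, where $U_1$ is the $1/z$-coefficient of $U$. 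Because $T=U$ near infinity and $R=TN^{-1}$ there, we have $T_1=U_1$ and $T_1 = R_1 + N_1$, where $N_1$ is the $1/z$-coefficient of the global parametrix \eqref{parametrix-global}. Expanding \eqref{parametrix-global} gives $N_1 = \nu\sigma_3$, which is diagonal, so $(T_1)_{12}=(R_1)_{12}$ and hence exactly
\begin{equation}
u(x;\a) = 2(-x)^{1/2}(R_1)_{12}.
\end{equation}
Everything thus reduces to computing the $(1,2)$ entry of $R_1$.

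Next I would extract $R_1$ from the integral equation \eqref{R-Cau-int}. Expanding $1/(z'-z) = -1/z + O(z^{-2})$ as $z\to\infty$ yields
\begin{equation}
R_1 = -\frac{1}{2\pi i}\int_{\Sigma_R}R_-(z')\bigl(J_R(z')-I\bigr)\,dz'.
\end{equation}
By the estimates \eqref{Jr-t-order}, $J_R-I$ is $O(t^{-1/2})$ only on the two circles $\partial U(z_\pm,\delta)$ and is $O(t^{-1})$ or exponentially small elsewhere; combined with $R_-=I+O(t^{-1/2})$ from \eqref{est-R}, this shows the leading contribution to $R_1$ comes from integrating $P^{(r)}N^{-1}-I$ over $\partial U(z_+,\delta)$ and $P^{(l)}N^{-1}-I$ over $\partial U(z_-,\delta)$, with an error of order $t^{-1}$. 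The off-diagonal entries of $P^{(r)}N^{-1}-I$, read off from \eqref{prz-large-t}, are meromorphic in $U(z_+,\delta)$ with a simple pole at $z_+$ from the factor $1/\zeta(z)$ (recall $\zeta(z_+)=0$ while $\beta(z)$ is holomorphic there), so the circle integrals evaluate by residues. After multiplication by $2(-x)^{1/2}$, the factor $t^{-1/2}=(-x)^{-3/4}$ produces the prefactor $(-x)^{-1/4}$, and the $O(t^{-1})$ corrections produce the claimed error $O(|x|^{-1})$.

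The residue at $z_+$ is computed from \eqref{prz-large-t} using \eqref{zeta-z+-nei} for $\mathrm{Res}_{z_+}1/\zeta$ and the value $\beta(z_+)^2=(8t)^\nu e^{3i\pi\nu/2}$ from \eqref{beta-defn}; since $(8t)^\nu = e^{\nu\ln 8(-x)^{3/2}}$ and $e^{2it/3}=e^{i\frac{2}{3}(-x)^{3/2}}$, this produces a term carrying the exponential $e^{i\frac{2}{3}(-x)^{3/2}+\nu\ln 8(-x)^{3/2}}$, i.e.\ the first term of \eqref{u-conjugate}. For the left endpoint I would invoke the symmetry \eqref{prl-realtion}, $P^{(l)}(z)=\sigma_2 P^{(r)}(-z)\sigma_2$: conjugation by $\sigma_2$ together with $z\mapsto-z$ flips the sign of the exponential and swaps the roles of the two off-diagonal entries, delivering the conjugate term with $e^{-i\frac{2}{3}(-x)^{3/2}-\nu\ln 8(-x)^{3/2}}$, i.e.\ the second term of \eqref{u-conjugate}.

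I expect the main obstacle to be the bookkeeping of constants and phases rather than any conceptual difficulty. The residue at $z_+$ comes out proportional to $s_3/h_1$ with $h_1=\sqrt{2\pi}\,e^{i\pi\nu}/\Gamma(-\nu)$ from \eqref{def-h-01}, whereas the target first term in \eqref{u-conjugate} is written using $1/(s_1\Gamma(\nu))$. Reconciling these requires the reflection formula $\Gamma(\nu)\Gamma(-\nu)=-\pi/(\nu\sin\pi\nu)$ together with the defining relation $1-s_1s_3=e^{-2\pi i\nu}$ from \eqref{parametrix-global}, which converts $s_3\Gamma(-\nu)$ into a multiple of $1/(s_1\Gamma(\nu))$. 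The accompanying factors $2^{-\sigma_3/2}$, $e^{it\sigma_3/3}$, the branch phase $e^{3i\pi/4}$ in \eqref{zeta-z+-nei}, and $e^{3i\pi\nu/4}$ from \eqref{beta-defn} must then combine to reproduce exactly the prefactors $\sqrt{\pi}\,e^{-i\pi\nu/2}$ and $e^{\mp i\pi/4}$ in \eqref{u-conjugate}; keeping all branch choices consistent across $\zeta$, $\beta$ and $N$ is the delicate part of the computation.
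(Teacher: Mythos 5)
Your proposal is correct and follows essentially the same route as the paper's proof: reduce $u(x;\a)=2(-x)^{1/2}(R_1)_{12}$ via the chain $R\to T\to U\to\Psi_\a$ (the paper writes this as $u=2\sqrt{-x}\lim_{z\to\infty}zR_{12}(z)$), extract $R_1$ from the Cauchy integral \eqref{R-Cau-int}, localize to the circles around $z_\pm$ using \eqref{Jr-t-order} and \eqref{est-R}, evaluate the circle integrals by residues from \eqref{prz-large-t}, \eqref{zeta-z+-nei}, \eqref{beta-defn}, use the symmetry \eqref{prl-realtion} for the left endpoint, and reconcile constants via $h_1$, the reflection formula \eqref{Gamma-identity}, and $1-s_1s_3=e^{-2\pi i\nu}$. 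The only items left implicit in your sketch (the clockwise orientation of $C_r$, $C_l$ in the residue evaluation, and the explicit value $\int_{C_r}\beta^{\pm2}/\zeta\,dz'=\frac{\pi i}{\sqrt 2}e^{\frac{\pi i}{4}\pm\frac{3\pi i\nu}{2}}(8t)^{\pm\nu}$) are exactly the bookkeeping you flagged, and they work out as you anticipated.
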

\begin{proof}
  To derive the asymptotics of the PII solution  $u(x;\alpha)$, we trace back the transformation $R \to T \to U \to \Psi_\a$ and make use of the formulas \eqref{Psi-U}, \eqref{def-T} and \eqref{error}. Recalling the relation that $u(x;\alpha)=2\biggl(\Psi_{-1}(x)\biggl)_{12} $, we obtain the following expression for $u(x;\alpha)$ from the integral representation of $R(z)$ in \eqref{R-Cau-int}:
\begin{equation}
u(x;\alpha)=2\sqrt{-x}\lim_{z\to\infty}(zR_{12}(z)) = -\frac{\sqrt{-x}}{\pi i} \int_{\Sigma_R} \biggl(R_{-}(z')(J_R(z')-I) \biggr)_{12} dz'.
\end{equation}
Combining \eqref{Jr-t-order} and \eqref{est-R}, one can see that the leading contribution of the above integral is $O(t^{-1/2})$, which comes from $\partial U(z_{\pm}, \delta)$. And the contributions from $\partial U(z_{0}, \delta)$ and $\Sigma_R \cut \partial {U}(z_{\pm}, \delta) \cup \partial {U}(0, \delta)$ are relatively smaller, which are of order $O(t^{-1})$ and $O(e^{-ct})$, respectively. Let us use $C_l$ and $C_r$ to denote the circles $\partial U(z_{\pm}, \delta)$ (see Figure \ref{contour(error)}), then the above formula gives us
\begin{equation} \label{ux-integral-1}
u(x;\alpha)=-\frac{\sqrt{-x}}{\pi i} \left[ \int_{C_r \cup C_l} (J_R)_{12}(z') dz' + O(t^{-1}) \right], \quad \textrm{as } t \to \infty.
\end{equation}
Therefore, more detailed asymptotic information of $J_R(z)$ for $z \in C_r\cup C_l$ is required.
Recalling the asymptotics of $P^{(r)}(z)$ in \eqref{prz-large-t}, the relation between $P^{(r)}(z)$ and $ P^{(l)}(z)$ in \eqref{prl-realtion}, and the fact that $J_R(z)=P^{(r,l)}(z) N^{-1}(z)$ for $z \in C_r\cup C_l$, we have, as $t \to \infty$,
\begin{equation}\label{asy-Jr}
J_R(z)= \begin{cases}\D
\left(\begin{matrix}
 1 & -\frac{\nu s_3}{h_1}e^{\frac{2it}{3}} \beta^2(z)\frac{1}{\sqrt{t}\zeta(z)}\\
-\frac{h_1}{s_3}e^{-\frac{2it}{3}} \beta^{-2}(z)\frac{1}{\sqrt{t}\zeta(z)} & 1
\end{matrix} \right)(I+O(t^{-1})), & \quad z\in C_r,\\
\left(\begin{matrix}
 1 & \frac{h_1}{s_3}e^{-\frac{2it}{3}} \beta^{-2}(-z)\frac{1}{\sqrt{t}\zeta(-z)}  \\
\frac{\nu s_3}{h_1}e^{\frac{2it}{3}} \beta^2(-z)\frac{1}{\sqrt{t}\zeta(-z)} & 1
\end{matrix} \right)(I+O(t^{-1})), & \quad z\in C_l.
\end{cases}
\end{equation}
Note that, the circles $C_l$ and $C_r$ are symmetric about the origin, and one may make a transformation $z \to -z$ such that the integrals in \eqref{ux-integral-1} involve the contour $C_r$ only. Then, combining the above two formulas, we get
\begin{equation}\label{contour-int}
 u(x;\alpha) = \frac{\sqrt{-x}}{\pi i\sqrt{t}} \left [ \frac{\nu s_3}{h_1}e^{2it/3} \int_{C_r} \frac{\beta^2(z')}{\zeta(z')} dz' +  \frac{h_1}{s_3}e^{-2it/3}\int_{C_r} \frac{\beta^{-2}(z')}{\zeta(z')} dz' +O(t^{-1}) \right],
\end{equation}
as $t \to \infty$.

From the definitions of $\zeta(z)$ and $\beta(z)$ in \eqref{zeta-defn} and \eqref{beta-defn}, one can see that $\zeta(z)$, $\beta(z)$ and $\beta^{-1}(z)$ are analytic in the neighbourhood of $z_+ = 1/2$. Moreover, $\zeta(z)$ has a simple zero at $1/2$; see \eqref{zeta-z+-nei}.
By Cauchy's residue theorem, we have from \eqref{zeta-z+-nei} and \eqref{beta-defn}
\begin{equation}
\int_{C_r} \frac{\beta^{\pm 2}(z')}{\zeta(z')} dz' =-2\pi i\Res_{z=\frac{1}{2}}\left(\frac{\beta^{\pm 2}(z)}{\zeta(z)}\right)=\frac{\pi i}{\sqrt{2}}e^{\frac{\pi i}{4}\pm \frac{3\pi i \nu}{2}}(8t)^{\pm \nu}.
\end{equation}
Since $\re\nu=0$, the above quantity is bounded as $t \to \infty$. Then, the above formula and \eqref{contour-int} give us
\begin{equation}
u(x;\alpha)=\frac{e^{\pi i/4}}{\sqrt{2} (-x)^{1/4}}\left(\frac{\nu s_3}{h_1}e^{\frac{2it}{3}+\frac{3\pi i \nu}{2}}(8t)^{\nu}+\frac{h_1}{s_3}e^{-\frac{2it}{3}-\frac{3\pi i \nu}{2}}(8t)^{-\nu}\right) +O(|x|^{-1}),
\end{equation}
as $x\to -\infty$ (note that $t=(-x)^{3/2}$). Finally, the formula \eqref{u-conjugate} follows from the definition of $h_1$ in \eqref{def-h-01},  the following identity for Gamma function
\begin{equation}\label{Gamma-identity}
\Gamma(\nu)\Gamma(-\nu)=-\frac{\pi}{\nu \sin \pi \nu},
\end{equation}
and $\nu=-\frac{1}{2\pi i}\ln (1-s_1 s_3)$.

This completes the proof of our lemma.
\end{proof}

With the above lemma, we are ready to prove our Theorems \ref{main-thm} and \ref{main-thm-im}.

\bigskip

\noindent\emph{Proof of Theorem \ref{main-thm}.} Note that, when $s_3=\bar{s}_1$, we have from \eqref{u-conjugate}
\begin{equation}\label{u-Re}
u(x;\a)= 2(-x)^{-1/4}\re \left\{ \frac{\sqrt{\pi} e^{-i\frac{\pi \nu}{2}}}{s_1 \Gamma(\nu)} e^{i\frac{2}{3}(-x)^{3/2}+\nu \ln {8(-x)^{3/2}} -i\frac{\pi}{4}}\right\} +O(|x|^{-1}),
\end{equation}
as $x\to -\infty$. Recalling the Stokes multipliers for the real Ablowitz-Segur solution $u_{\textrm{AS}}(x;\a)$ in \eqref{stokes-real-im} with the condition \eqref{real-alpha-k}, we have
\begin{equation}
  \nu =-\frac{1}{2\pi i}\ln (1-s_1s_3)= -\frac{1}{2\pi i}\ln (1-|s_1|^2)  = -\frac{1}{2\pi i} \ln\biggl( \cos^2(\pi \a) - k^2\biggr).
\end{equation}
From the properties of Gamma functions, we get
\begin{equation}\label{Gamma-mod}
\left|\frac{1}{\Gamma(\nu)} \right|^2 =\frac{i\nu}{2\pi}e^{\pi i \nu}|s_1|^2.
\end{equation}
Combining the above three formulas, we obtain the asymptotic expansion of $u_{\textrm{AS}}(x;\a)$  in \eqref{asy-neg} as well as the connection formulas \eqref{d-k} and \eqref{phi-k}.

The pole-free property of $u_{\textrm{AS}}(x;\a)$ follows from Lemma \ref{lemma-vanishing-1}, which is the vanishing lemma for the RH problem of $\Psi_\a$ with the Stokes multipliers in \eqref{stokes-real-im} under the condition \eqref{real-alpha-k}.

This finishes the proof of Theorem \ref{main-thm}.\hfill $\Box$

\bigskip

Similarly, we prove  Theorem \ref{main-thm-im} below.

\bigskip

\noindent\emph{Proof of Theorem \ref{main-thm-im}.} When $ s_3 = -\bar{s}_1$, the formula \eqref{u-conjugate} gives us
\begin{equation}\label{u-im}
u(x;\a)= 2i(-x)^{-1/4}\im \left\{ \frac{\sqrt{\pi} e^{-i\frac{\pi \nu}{2}}}{s_1 \Gamma(\nu)} e^{i\frac{2}{3}(-x)^{3/2}+\nu \ln {8(-x)^{3/2}} -i\frac{\pi}{4}}\right\} +O(|x|^{-1}),
\end{equation}
as $x\to -\infty$. With the specific Stokes multipliers in \eqref{stokes-real-im} and the condition \eqref{im-alpha-k} for the purely imaginary Ablowitz-Segur solution $u_{iAS}(x;\a)$, one can also see that $\nu$ is a purely imaginary number, namely,
\begin{equation}
  \nu =-\frac{1}{2\pi i}\ln (1-s_1s_3) = -\frac{1}{2\pi i} \ln\biggl( \cosh^2(\pi i \a) + |k|^2\biggr).
\end{equation}
Similar to \eqref{Gamma-mod}, we have
\begin{equation}
\left|\frac{1}{\Gamma(\nu)} \right|^2  = \frac{-i\nu}{2\pi}e^{-\pi i \nu}\frac{|s_1|^2}{1+|s_1|^2}.
\end{equation}
Thus, from the above three formulas, we arrive at the asymptotic expansion of $u_{i\textrm{AS}}(x;\a)$  in \eqref{asy-neg-im} as well as the connection formulas \eqref{d-k-im} and \eqref{phi-k-im}.

The pole-free property of $u_{i\textrm{AS}}(x;\a)$ is obvious, since the residues of all poles of PII transcendents are 1 or $-1$.

This finishes the proof of Theorem \ref{main-thm-im}.\hfill $\Box$

\section*{Acknowledgements}

We would like to thank the referees for their helpful suggestions and comments.

The authors were partially supported by grants from the Research Grants Council of the Hong
Kong Special Administrative Region, China (Project No. CityU 11300814, CityU 11300115, CityU 11303016).

\end{document}